\newcommand{\lb}{\varLambda}
\def \<{\langle}
\def \>{\rangle}
\newcommand{\bg}{\begin{equation}}
\newcommand{\ed}{\end{equation}}
\newcommand{\bga}{\begin{eqnarray}}
\newcommand{\eda}{\end{eqnarray}}
\def\cbdu{\par{\raggedleft$\Box$\par}}
\newtheorem {Theorem}  {Theorem}
\numberwithin{Theorem}{section}
\newtheorem {Lemma}[Theorem]  {Lemma}
\newtheorem {Proposition}[Theorem]{Proposition}
\theoremstyle{definition}
\newtheorem{Definition}[Theorem]{Definition}
\theoremstyle{remark}
\def \l{\lambda}
\chardef\csname pre amssym.def
\def\undefine#1{\let#1\undefined}
\def\newsymbol#1#2#3#4#5{\let\next@\relax
 \ifnum#2=\@ne\let\next@\msafam@\else
 \ifnum#2=\tw@\let\next@\msbfam@\fi\fi
 \mathchardef#1="#3\next@#4#5}
\def\mathhexbox@#1#2#3{\relax
 \ifmmode\mathpalette{}{\m@th\mathchar"#1#2#3}%
 \else\leavevmode\hbox{$\m@th\mathchar"#1#2#3$}\fi}
\def\hexnumber@#1{\ifcase#1 0\or 1\or 2\or 3\or 4\or 5\or 6\or 7\or 8\or
 9\or A\or B\or C\or D\or E\or F\fi}
\font\teneufm=eufm10 \font\seveneufm=eufm7 \font\fiveeufm=eufm5
\newcounter{remark}
\newenvironment{remark}
{\medskip \stepcounter{remark} \noindent \textit{Remark
\arabic{section}.\arabic{remark}.}}{\rm \cbdu}
\def \onehalf {\frac{1}{2}}
\def \ddelta {\delta_{n+1}}
\newcommand{\supp}{{\mathit supp}\,}
\newcommand{\divv}{{\text {div}}\,}
\newcommand{\divergence}{\nabla \cdot}
\newcommand{\curl}{\nabla \times}
\renewcommand{\d}{\delta}
\newcommand{\om}{\omega}
\newcommand{\omp}{\omega^{(p)}}
\newcommand{\omc}{\omega^{(c)}}
\renewcommand{\l}{\lambda}
\newcommand{\Om}{\Omega}
\renewcommand{\a}{\alpha}
\renewcommand{\b}{\beta}
\newcommand{\gm}{\gamma}
\newcommand{\Gm}{\Gamma}
\newcommand{\R}{\mathbf{R}}
\newcommand{\les}{\lesssim}
\newcommand{\Dd}{{\mathcal D}}
\newcommand{\Rr}{{\mathcal R}}
\newcommand{\Ss}{{\mathcal S}}
\newcommand{\Tt}{{\mathcal T}}
\newcommand{\mB}{\overline{B}}
\newcommand{\mP}{\overline{p}}
\newcommand{\mR}{\overline{R}}
\newcommand{\tten}{\mathring{\otimes}}
\newcommand{\IdM}{\mathbf{Id}}
\newcommand\powermone[1]{#1^{-1}}
\newcommand\dist[1]{\text{dist}(#1)}
\def  \C   {{\mathbb C}}
\def  \R   {{\mathbb R}}
\def  \Z   {{\mathbb Z}}
\def  \N   {{\mathbb N}}
\def  \T   {{\mathbb T}}
\def  \W   {{\mathbb W}}
\def  \WW   {{\mathbf W}}
\def  \OOm  {{\mathbf \Omega}}
\def  \haf  {{\frac{1}{2}}}
\def  \p   {\partial}
\def  \eek   {\mathbf{e}_k}
\def  \npone {{n+1}}
\def  \mone  {{-1}}
\newcommand\onenorm[1]{\lVert#1\rVert_{L^1(\T^3)}}
\newcommand\twonorm[1]{\lVert#1\rVert_{L^2(\T^3)}}
\newcommand\rnorm[1]{\Vert#1\Vert_{L^r(\T^3)}}
\newcommand\Linfnorm[1]{\Vert#1\Vert_{L^\infty(\T^3)}}
\newcommand\Conenorm[1]{\Vert#1\Vert_{C^1(\T^3)}}
\newcommand\Ctwonorm[1]{\Vert#1\Vert_{C^2(\T^3)}}
\newcommand\Honenorm[1]{\Vert#1\Vert_{H^1(\T^3)}}
\newcommand\Hsnorm[1]{\Vert#1\Vert_{H^s(\T^3)}}
\newcommand\WNOnorm[1]{\Vert#1\Vert_{W^{N,1}(\T^3)}}
\newcommand\LinfNorm[1]{\left \lVert#1\right \rVert_{L^\infty(\T^3)}}
\newcommand\HoneNorm[1]{\left \lVert#1\right \rVert_{H^1(\T^3)}}
\def\build#1_#2^#3{\mathrel{\mathop{\kern 0pt#1}\limits_{#2}^{#3}}}
\begin{document}

\title[Stationary weak solutions of the EMHD equation]{Three dimensional stationary solutions of the Electron MHD equations}

\author [Qirui Peng]{Qirui Peng}
\address{Department of Mathematics, Stat. and Comp. Sci.,  University of Illinois Chicago, Chicago, IL 60607,USA}
\email{qpeng9@uic.edu}

\thanks{}





\begin{abstract} The goal of this paper is to construct non-trivial steady-state weak solutions of the three dimensional Electron Magnetohydrodynamics equations in the class of $H^s(\T^3)$ for some small $s > 0$. By exploiting the formulation of the stationary EMHD equations one can treat them as generalized Navier-Stokes equations with half Laplacian. Therefore with convex integration scheme we obtained such stationary weak solutions, which is not yet realizable in the case of classical 3D Navier-Stokes equations.

\bigskip

KEY WORDS: Convex integration, non-uniqueness, Electron Magnetohydrodynamics.

\hspace{0.02cm}CLASSIFICATION CODE: 35Q35, 76B03, 76W05 .\\

\end{abstract}

\maketitle

\section{Introduction
}
\subsection{The stationary EMHD equations}
The Electron Magnetohydrodynamics(EMHD) equations are given by
\begin{subequations}\label{EMHD}
\begin{align}
\partial_t B + \nabla \times \big ( (\nabla \times B) \times B \big)    &= \nu \Delta B, \label{EMHD 1}\\
                                                  \nabla \cdot B  &= 0, \label{EMHD 2}
\end{align}
\end{subequations}
where $B:[0,\infty) \times \T^3 \to \R^3$ is the unknown vector field. The number $\nu > 0$ stands for the resistivity constant. For convenience of notation in the following discussion we set $\nu = 1$. In this paper, our main interest is to construct weak solutions of the above equation, whose definition is given as follows.
\begin{Definition} Let $T > 0$ and denote the space of divergence free test function $\phi \in C^\infty_c ([0,T)\times \T^3)$ by $\Dd_T$. Given any weakly divergence free function $B_0 \in L^2(\T^3)$, we call $B \in L^2([0,T]\times \T^3)$ a weak solution to the equations $\eqref{EMHD}$ with initial data $B_0$ if $B$ is weakly divergence free for a.e. $t \in [0,T]$ and satisfies 
\begin{equation}\label{weak_formulation_EMHD}
\int_0^T \int_{\T^3} B \cdot \big(\p_t \phi + \Delta \phi + B \cdot \nabla (
\curl \phi)   \big) dxdt = -\int_{\T^3} B_0(x) \phi(x,0) dx,
\end{equation}
for all $\phi \in \Dd_T$, where we use the vector calculus identity 
\begin{equation}\label{vector_cal_identity}
 (\curl B) \times B  = (B \cdot \nabla) B  + \haf \nabla |B|^2.
\end{equation}
\end{Definition}
We wish to study the existence of stationary weak solution to $\eqref{EMHD}$, i.e. weak solutions that are independent of the time variable. By taking the inverse of curl on both side of $\eqref{EMHD 1}$, using $\eqref{vector_cal_identity}$ and realize that $\p_t B \equiv 0$, the stationary EMHD equations can be written as 
\begin{subequations}\label{stat_Hall_equation}
\begin{align}
\nabla \times B + (B\cdot \nabla) B + \nabla p &= 0, \label{stat_Hall_equation 1}\\
                                                  \divergence B   &= 0, \label{stat_Hall_equation 2}
\end{align}
\end{subequations}
where the scalar function $p(t,x)$ is usually known as the pressure. The expression for $p$ follows directly from $\eqref{vector_cal_identity}$ and can be solved by
\begin{equation}\label{formula_pressure}
p = \haf |B|^2 + \Delta^{-1} \divv \divv  (B\otimes B).
\end{equation}
Note the analogy between the equation $\eqref{stat_Hall_equation}$ and the stationary generalized Navier-Stokes equations 
\begin{subequations}\label{stat_NSE}
\begin{align}
\nu(-\Delta)^\haf u + (u\cdot \nabla) u + \nabla p &= 0, \label{stat_NSE}\\
                                                  \divergence u   &= 0, \label{stat_NSE}
\end{align}
\end{subequations}
\subsection{Main results and previous works} 
\begin{Theorem}\label{main_Theorem}
For any $s < \frac{1}{250}$, there exists a nontrivial stationary weak solution $B \in L^2(\T^3)$ of $(\ref{stat_Hall_equation})$ such that $B \in H^s(\T^3)$.
\end{Theorem}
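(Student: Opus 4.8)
The plan is to prove Theorem~\ref{main_Theorem} by a convex integration scheme, exploiting the observation made above that (\ref{stat_Hall_equation}) is a stationary generalized Navier--Stokes system whose linear operator, after inverting the curl, is $\nabla\times$: on divergence-free fields this is a first-order operator with symbol of modulus $|\xi|$, so it plays the role of the half-Laplacian $(-\Delta)^{1/2}$. First I would relax (\ref{stat_Hall_equation}) to the Reynolds-stress system
\begin{equation*}
\nabla\times B_q + (B_q\cdot\nabla)B_q + \nabla p_q = \nabla\cdot\mathring{R}_q,\qquad \nabla\cdot B_q = 0,
\end{equation*}
with $\mathring{R}_q$ symmetric and trace-free, and state an inductive proposition along frequencies $\lambda_q = a^{b^q}$ and amplitudes $\delta_q = \lambda_q^{-2\beta}$ (with $b>1$, $\beta>0$ small and $a$ large, fixed at the end): from $(B_q,\mathring{R}_q)$ with $\|\mathring{R}_q\|_{L^1}\le\delta_{q+1}$, appropriate $C^1$ bounds on $B_q$, and $\|B_q\|_{L^2}$ controlled above and below, one produces $(B_{q+1},\mathring{R}_{q+1})$ with $B_{q+1}=B_q+w_{q+1}$, $\|\mathring{R}_{q+1}\|_{L^1}\le\delta_{q+2}$, $\|w_{q+1}\|_{L^2}\lesssim\delta_{q+1}^{1/2}$, and $\|w_{q+1}\|_{H^s}$ summable in $q$.

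For the perturbation step, after mollifying $(B_q,\mathring{R}_q)$ at a length scale $\ell$ to get clean derivative bounds, I would build $w_{q+1}$ out of intermittent building blocks $W_\xi$ adapted to $\T^3$ --- concentrated (in the spirit of Mikado/jet flows) on small sets, oscillating at frequency $\lambda_{q+1}$, with a transversal concentration parameter $r_\perp=\lambda_{q+1}^{-\theta}$ --- normalized so that $\|W_\xi\|_{L^2}\sim1$ while $\|W_\xi\|_{L^1}\sim r_\perp\ll1$ and $\|W_\xi\|_{C^0}\sim r_\perp^{-1}\gg1$. Via the geometric lemma one picks amplitudes $a_\xi(x)$ so that $\sum_\xi a_\xi(x)^2\langle W_\xi\otimes W_\xi\rangle = \rho(x)\,\mathrm{Id}-\mathring{R}_\ell(x)$, which makes the low-frequency part of $w_{q+1}\otimes w_{q+1}$ cancel $\mathring{R}_\ell$ up to a pressure gradient; one then adds an incompressibility corrector so that $\nabla\cdot w_{q+1}=0$. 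There is no temporal corrector, since the system is stationary.

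With $B_{q+1}=B_\ell+w_{q+1}$, the new Reynolds stress is
\begin{equation*}
\mathring{R}_{q+1} = \mathcal{R}\Big[\underbrace{\nabla\times w_{q+1}}_{\text{linear}} + \underbrace{(B_\ell\cdot\nabla)w_{q+1}+(w_{q+1}\cdot\nabla)B_\ell}_{\text{Nash}} + \underbrace{\nabla\cdot\big(w_{q+1}\otimes w_{q+1}+\mathring{R}_\ell\big)}_{\text{oscillation}}\Big] + (\text{corrector and mollification errors}),
\end{equation*}
and I would estimate each term in $L^1$, passing through $L^{1+\kappa}$ whenever the anti-divergence $\mathcal{R}$ (Calder\'on--Zygmund of order $-1$) or $\mathcal{R}\circ(\nabla\times)$ (of order $0$) is applied, since these are not bounded on $L^1$. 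The linear error is the one with no analogue in the Euler case: $\|\mathcal{R}(\nabla\times w_{q+1})\|_{L^{1+\kappa}}\lesssim\|w_{q+1}\|_{L^{1+\kappa}}\sim\delta_{q+1}^{1/2}\,r_\perp^{\,1-O(\kappa)}$ must be pushed below $\delta_{q+2}$, which forces $r_\perp$ to be a sufficiently negative power of $\lambda_{q+1}$. On the other hand the oscillation error is controlled by $\delta_{q+1}$ times the frequency ratio $\lambda_q/\lambda_{q+1}$ and further intermittency-dependent factors that grow as $r_\perp\to0$, and the Nash error is $\lesssim\lambda_{q+1}^{-1}\|B_\ell\|_{C^1}\delta_{q+1}^{1/2}$ plus corrector terms. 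Balancing the linear error against the oscillation and Nash errors confines $(b,\beta)$ to a narrow window, and requiring in addition $\sum_q\|w_{q+1}\|_{H^s}\lesssim\sum_q\lambda_{q+1}^{s}\delta_{q+1}^{1/2}<\infty$ yields exactly the admissible range $s<\tfrac{1}{250}$.

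Finally, one passes to the limit: $B_q\to B$ in $H^s$ and $\mathring{R}_q\to0$ in $L^1$, so $B$ solves (\ref{stat_Hall_equation}) distributionally (equivalently, $B$ is a time-independent weak solution in the sense of (\ref{weak_formulation_EMHD})), and $B$ is nontrivial because $\|B\|_{L^2}\ge\|B_1\|_{L^2}-\sum_{q\ge1}\|w_{q+1}\|_{L^2}\ge\tfrac12\delta_1^{1/2}>0$ once the first iterate carries a nonzero stress and $a$ is large. I expect the main obstacle to be exactly the tension from the curl term $\nabla\times B$: after applying the anti-divergence it is an order-zero operator, carrying no frequency-gain factor the way the transport term does in the Euler case, so it can only be absorbed by strongly intermittent building blocks; but intermittency inflates $\|w_{q+1}\|_{C^0}\sim r_\perp^{-1}$, hence both the oscillation and the Nash errors, so every inequality must be closed simultaneously in a very tight parameter range --- which is what pins $s$ down to be as small as $\tfrac{1}{250}$ (with the full Laplacian in place of the half-Laplacian, $\mathcal{R}$ composed with the dissipation would be order $1$, and in three dimensions no admissible $r_\perp$ survives, which is why the analogue for the classical stationary $3$D Navier--Stokes equations is still open).
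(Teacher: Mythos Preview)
Your proposal is correct and takes essentially the same approach as the paper: convex integration with intermittent stationary Mikado flows (the paper's concentration parameter $\mu_{n+1}^{-1}$ plays the role of your $r_\perp$), the same linear/Nash/oscillation/corrector error decomposition for the new Reynolds stress, and $H^s$ convergence via the interpolation $\|w_{q+1}\|_{H^s}\lesssim\lambda_{q+1}^{s}\delta_{q+1}^{1/2}$. The only cosmetic differences are that the paper starts the iteration from the trivial tuple $(0,0,0)$ and builds in nontriviality by adding $\delta_{n+1}$ to the amplitude cutoff $\rho$, and that it isolates a separate interference error $R_F$ coming from pairs $k\neq k'$ of Mikado directions whose supports cross (which you have implicitly folded into the oscillation error).
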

\begin{remark}
Theorem $\ref{main_Theorem}$ also gives a non-uniqueness result for weak solutions of the equations $\eqref{EMHD}$.
\end{remark}
The EMHD has been studied extensively since the last decade. Here we summarize some recent works done in the context of EMHD equations:
\subsubsection{Well-posedness and ill-posedness}The local well-posedness of the classical solutions of $\eqref{EMHD}$ was initially proven by Chae, Degond, and Liu\cite{CDL14}. The most recent contiuation crition was established by Dai and Oh\cite{DO24}. The ill-posedness in the sense of the norm inflation was shown by Chae and Weng\cite{CW16} and lately by Jeong and Oh\cite{JO24}. The singularity formation of EMHD with $\nu = 0$ was constructed in the same paper of Chae, Weng.
In \cite{DGW24}, the authors established a sufficient condition under which a self-similar blow-up of the EMHD without resistivity cannot be developed.  
\subsubsection{Convex integration and stationary solutions} The method of convex integration scheme, also known as the Nash iteration scheme, has been developed comprehensively in the recent decade. At the moment, this machinery serves as one of the major tools in studying the pathological behavior of the fluid dynamics. The power of the scheme is that it allows us to construct weak solutions to different fluid models with arbitrary regularity within certain regime. Here we refer the readers to the survey paper \cite{BV19} for the development of the studies of the fluid equations via convex integration over the last decade. The most well-known triumph of the convex integration is perhaps the proof on the Onsager's conjecture of the Euler equations by Isett\cite{I18}. Later Novack and Vicol had accomplished another remarkable work on the $L^3-$ based Onsager's theorem\cite{NV23}. Very recently, the generalized Onsager's conjecture for the Surface quasi-geostrophic equations has also been proven indenpendently by Dai, Giri and Radu\cite{DGR24} and Looi and Isett\cite{LI24}. In regards to the EMHD, the author in \cite{D24} created non-unique weak solutions of EMHD in the class of $L^\gm_tW^{1,\infty}_x \cap L^1_t L^2_x$. The readers are also referred to \cite{LZZ24,D21} for the related works on the non-uniqueness of the MHD and Hall-MHD. On the topics of constructing nontrivial stationary weak solutions via convex integration, seemly the only work is thanks to Luo\cite{L19}, in which the author constructed stationary weak solutions for Navier Stokes equations in dimension $d \geq 4$.

\section{The iteration scheme and the main proposition} 
The weak solutions in Theorem $\ref{main_Theorem}$ will be obtained by the method of convex integration. To that end, we consider the stationary EMHD-Reynolds system which is given by 
\begin{subequations}\label{EMHD_Reynolds_system}
\begin{align}
 \nabla \times B + B\cdot \nabla B + \nabla p &= div R, \label{EMHD_Reynold_1}\\
                                                  \divv B   &= 0, \label{EMHD_Reynold_2}
\end{align}
\end{subequations}
where $R$ is a symmetric traceless $2-$tensor. The idea is to construct iteratively a sequence of solutions $B_n$ such that $(B_n,p_n,R_n)$ satisfies $\eqref{EMHD_Reynolds_system}$, i.e. 
\begin{subequations}\label{Iteration_system}
\begin{align}
 \nabla \times B_n + B_n\cdot \nabla B_n + \nabla p_n &= div R_n, \label{Iteration_system 1}\\
                                                  \divv B_n   &= 0, \label{Iteration_system 2}
\end{align}
\end{subequations}
where $n \in \N$. Moreover, $(B_n,R_n)$ satisfy the following inductive hypothesis: 
\begin{subequations}
\begin{align}
||R_n||_{L^1(\T^3)} \les \delta_{n+1}, \label{Inductive Hypothesis 1}\\
||B_n||_{L^2(\T^3)} \les 1-\delta^{\frac{1}{2}}_{n}, \label{Inductive Hypothesis 2}\\
||\nabla B_n||_{L^2(\T^3)} \les \l_n\delta^\haf_{n}, \label{Inductive Hypothesis 3}
\end{align}
\end{subequations}
where 
\begin{equation} \label{def_lambda_delta}
\lambda_n := \lceil{a^{b^n}} \rceil, \ \ \  \delta_n := \lambda^{-2\beta}_n
\end{equation}
with the values of $a, b$ and $\beta$ given by \eqref{def_b_beta}.

\begin{Proposition} \label{Main_proposition} With the choice of $a,b$ and $\beta$ in \eqref{def_b_beta}, we have that if the tuple $(B_n,p_n,R_n)$ for some $n \geq 1$ is a solution to the stationary EMHD-Reynolds system $\eqref{Iteration_system}$ which satisfies the inductive hypothesis $(\ref{Inductive Hypothesis 1}) - (\ref{Inductive Hypothesis 3})$, then there exists $(B_{n+1},p_{n+1},R_{n+1})$, a solution of the $\eqref{Iteration_system}$ satisfying the same induction hypothesis with $n$ replaced by $n+1$. Furthermore, the difference $B_{n+1}-B_n$ satisfies 
\begin{align}
\twonorm{B_{n+1}-B_n} &\lesssim \d^\haf_{n+1} \label{Main_prop_1},\\
\Honenorm{B_{n+1}-B_n} &\lesssim \l_{n+1} \d^\haf_{n+1}, \label{Main_prop_2}
\end{align}
where $\l_{n}$ and $\d_n$ are given by $\eqref{def_lambda_delta}$.
\end{Proposition}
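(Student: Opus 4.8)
The proof is a single step of the convex integration scheme, so I would follow the now-standard "Mikado/intermittent flow" architecture adapted to the half-Laplacian stationary setting. First I would fix the frequency parameter $\l_{n+1}$ as in \eqref{def_lambda_delta} and introduce a mollification: replace $(B_n,R_n)$ by $(B_\ell,R_\ell)$ mollified at a length scale $\ell$ chosen so that $\ell\l_n \ll 1$ but $\ell\l_{n+1}\gg 1$ (typically $\ell = \l_n^{-1}\l_{n+1}^{-\alpha}$ for a small $\alpha$), which preserves \eqref{Iteration_system} up to a harmless commutator and keeps $\|R_\ell\|_{L^1}\lesssim\d_{n+1}$ while giving $C^1$-bounds on $R_\ell$ in terms of $\ell$. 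Then I would define the perturbation $w_{n+1} = B_{n+1}-B_\ell$ as a sum of highly oscillatory, spatially intermittent building blocks (e.g. intermittent shear or Mikado flows $W_\xi$ supported on a sparse set of tubes, oscillating at frequency $\l_{n+1}$) with amplitudes $a_\xi(x)$ chosen via the standard geometric lemma so that $\sum_\xi a_\xi^2 \fint W_\xi\otimes W_\xi \approx \rho\,\mathrm{Id} - R_\ell$, where $\rho \sim \d_{n+1}$ is a scalar normalization. Concretely $a_\xi = \rho^{1/2}\gamma_\xi(\mathrm{Id}-R_\ell/\rho)$, so that the principal part of the new stress cancels the old one. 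One must also add a divergence corrector and, as usual in the stationary case, a "temporal"-type corrector is unnecessary but a pressure-gradient adjustment is.

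The heart of the matter is estimating the new Reynolds stress $R_{n+1}$, defined so that
\begin{equation}
\div R_{n+1} = \underbrace{\nabla\times w_{n+1}}_{\text{linear error}} + \underbrace{\div\big(B_\ell\otimes w_{n+1} + w_{n+1}\otimes B_\ell\big)}_{\text{transport/Nash error}} + \underbrace{\div\big(w_{n+1}\otimes w_{n+1} - (\rho\,\mathrm{Id}-R_\ell)\big)}_{\text{oscillation error}} + (\text{mollification error}).
\end{equation}
I would invert $\div$ using the standard stationary antidivergence operator $\antidiv$ (a Calderón--Zygmund-type operator gaining one derivative, plus a nonlocal piece to handle the $\nabla\times w$ term — here $\nabla\times$ costs only one derivative, same as $\div$, so the linear term behaves like a slightly worse Nash error and is in fact the price we pay for having only half a Laplacian of dissipation, which is why $s$ must be taken so small). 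Each error is estimated in $L^1(\T^3)$ (or $L^p$ for $p$ close to $1$, then interpolated) using the $L^p$-improvements from intermittency: the oscillation error gains a factor $\l_{n+1}^{-1}$ from the antidivergence hitting high frequencies, the Nash error is small because $\|B_\ell\|_{C^1}$ is controlled and $w_{n+1}$ has zero average on each period cell, and the linear error is the borderline one. Choosing $b$ large and then $a$ large (with $\beta$ small) makes all errors $\lesssim\d_{n+2}$.

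For the inductive estimates on $B_{n+1}$ itself: \eqref{Main_prop_1} follows since $\|w_{n+1}\|_{L^2}\lesssim\|a_\xi\|_{L^2}\|W_\xi\|_{L^2}\lesssim\rho^{1/2}\lesssim\d_{n+1}^{1/2}$ by the normalization of the building blocks; \eqref{Main_prop_2} follows because each derivative on $w_{n+1}$ costs $\l_{n+1}$, giving $\|w_{n+1}\|_{H^1}\lesssim\l_{n+1}\d_{n+1}^{1/2}$, and the correctors are lower order. Then \eqref{Inductive Hypothesis 2} and \eqref{Inductive Hypothesis 3} for $n+1$ are obtained by combining these with the triangle inequality and the inductive bounds at level $n$ (the $L^2$ bound uses that $\|w_{n+1}\|_{L^2}^2 \approx \fint(\rho\,\mathrm{Id}-R_\ell):\mathrm{Id}\approx 3\rho$ up to errors, pushing $\|B_{n+1}\|_{L^2}$ toward $1$ from below — this is the mechanism producing a \emph{nontrivial} limit). \textbf{The main obstacle} I anticipate is the linear curl term $\nabla\times w_{n+1}$ in the stress: unlike the full-Laplacian Navier--Stokes case where $(-\Delta)^{-1}\nabla\times$ or the dissipative smoothing absorbs one full derivative, here the half-Laplacian leaves a deficit, so the gain from $\antidiv$ must be squeezed out entirely from the spatial oscillation of $w_{n+1}$ at frequency $\l_{n+1}$, and closing the estimate forces the very small regularity threshold $s < 1/250$; balancing the intermittency dimension of the building blocks against this linear error is the delicate bookkeeping that pins down all the numerology ($\beta$, $b$, and the exponent budget).
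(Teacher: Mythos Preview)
Your plan matches the paper's proof essentially point for point: mollify at a scale $\ell$ with $\ell^{-1}=\delta_{n+1}^{-1/2}\lambda_n\delta_n^{1/2}$, build the perturbation from intermittent Mikado flows $\WW_k(\sigma_{n+1}x)$ with amplitudes $a_k=\rho^{1/2}\Gamma_k(\mathrm{Id}-\bar R_\ell/\rho)$ via the geometric lemma, add a divergence corrector, and split the new stress into linear, corrector, oscillation, and interference pieces estimated in $L^r$ for $r$ close to $1$. One small correction to your diagnosis: in the paper's actual numerology (with $b=32$, $\beta=1/250$, $\alpha=16/25$) the tightest constraint is not the linear error $\mathcal R(\nabla\times w_{n+1})$ but the oscillation error $R_{\textit{osc}}$ (margin $\approx -0.03$ versus $\approx -0.042$ for the linear term), so the bookkeeping that pins down $\beta$ comes from balancing the $\sigma_{n+1}^{-1}$ gain of the antidivergence against the $C^2$ cost on $a_k^2$, not from the curl term.
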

\begin{proof}[Proof of Theorem \ref{main_Theorem}] Let $\beta = \frac{1}{250}$ and choose $a$ and $b$ as in proposition $\ref{Main_proposition}$. The tuple $(B_1,p_1,R_1)=(0,0,0)$ is a solution to $\eqref{Iteration_system}$, therefore by proposition $\ref{Main_proposition}$ there exists a sequence of solutions $\big \{(B_n,p_n,R_n)\big \}_{n\geq 1}$ such that they satisfy $\eqref{Inductive Hypothesis 1}-\eqref{Inductive Hypothesis 3}$ as well as $\eqref{Main_prop_1}-\eqref{Main_prop_2}$. Then for any $n\geq 1$, taking $0<s<\b$ yields 
\begin{align*}
\Hsnorm{B_n} &\leq \sum_{k=1}^{n-1} \Hsnorm{B_{k-1}-B_k} \\
&\lesssim \sum_{k=1}^{n-1} \Honenorm{B_{k+1}-B_k}^s \twonorm{B_{k+1}-B_k}^{1-s}\\
&\lesssim \sum_{k=1}^{n-1} \l^s_{k+1}\d_{k+1}^{\haf s} \d^{\haf(1-s)}_{k+1} = \sum_{k=1}^{n-1} \l^s_{k+1}\d^\haf_{k+1} =\sum_{k=1}^{n-1}\l^{s-\b}_{k+1} \\
&\lesssim \sum_{k=1}^\infty \l^{s-\b}_{k+1} < \infty.
\end{align*}
Therefore $B_n$ is uniformly bounded in $H^s$. By the Rellich compactness theorem, upon extracting a further subsequence if necessary, there exists a function $B$ such that $B_n \to B$ in $L^2(\T^3)$ and that $B \in H^s$. To show that $B$ is a weak solution to $\eqref{stat_Hall_equation}$, let $\phi \in \Dd_T$ and multiply $\curl \phi$ on both side of $\eqref{Iteration_system 1}$ we obtain
\[
\int_{T^3} (\curl \phi) \cdot \Big (\curl B_n + (B_n \cdot \nabla)B_n + \nabla p_n \Big)dx = \int_{\T^3} (\curl \phi) \divv R_n. 
\]
Use integration by part 
\[
\int_{T^3} B_n \cdot \Big (\Delta \phi + (B_n \cdot \nabla (\curl \phi)) \Big)dx - \int_{\T^3} \nabla(\curl \phi) R_n dx =0.
\]
Since by $\eqref{Inductive Hypothesis 1}$, $R_n \to 0$ in $L^1$ we have 
\[
\int_{\T^3} \nabla(\curl \phi) R_n \lesssim \onenorm{R_n}\Ctwonorm{\phi} \to 0.
\]
In addition, $B_n \to B$ in $L^2$ we have 
\[
\int_{T^3} B_n \cdot \Delta \phi dx \to \int_{T^3} B \cdot \Delta \phi dx
\]
and
\begin{align*}
&\bigg | \int_{\T^3} B_n \cdot \Big( (B_n \cdot \nabla) (\curl \phi) \Big) - \int_{\T^3} B \cdot \Big( (B \cdot \nabla) (\curl \phi) \Big) \bigg | \\
\leq &\bigg |\int_{\T^3} \Big(B_n-B \Big) \cdot \Big( (B_n \cdot \nabla) (\curl \phi) \Big)\bigg| + \bigg| \int_{\T^3} B \cdot \Big( \big((B_n-B) \cdot \nabla \big) (\curl \phi) \Big) \bigg | \\
\lesssim &\twonorm{B_n-B} \twonorm{B_n} \Ctwonorm{\phi} + \twonorm{B}\twonorm{B_n-B}\Ctwonorm{\phi} \to 0.
\end{align*}
The fact that $p_n \to p$ in $L^1$ follows from the Sobolev embedding $H^s \hookrightarrow L^q$, $q=\frac{6}{3-2s}$. Since $q > 2$ and Riesz operator is bounded from $L^{\frac{q}{2}}$ to $L^\frac{q}{2}(\T^3)$, we conclude that $p_n \to p$ in $L^\frac{q}{2}(\T^3)$, implying the convergence in $L^1(\T^3)$. 
\end{proof}

\section{proof of the Main Proposition}\label{proof_Main_Prop}
This section is devoted to the proof of proposition $\ref{Main_proposition}$. We start by choosing the suitable parameters
\subsection{Choice of parameters} We start from fixing several parameters, including those mentioned in $(\ref{def_lambda_delta})$.  
\begin{itemize}
\item[(1)] Let
\begin{equation}\label{def_b_beta}
a=5,\ \  b = 32, \ \ \beta = \frac{1}{250}.
\end{equation}
In view of the Theorem $(\ref{main_Theorem})$, our goal is to construct stationary weak solution $B$ such that $B \in H^s$ for any $s < \beta$. \\
\item[(2)] Define the spatial oscillation and concentration parameters $\sigma_{n+1}$  and $\mu_{n+1}$ respectively: 
\begin{equation}\label{def_sigma_mu}
\sigma_{n+1} = \l_{n+1}^\a,  \ \ \ \mu_{n+1} = \l_{n+1}^{1-\a},
\end{equation} \\
where $\a = \frac{16}{25}$. Note that $\l_{n+1} = \sigma_{n+1}\mu_{n+1}$.
\end{itemize}

\subsection{Mollification}\label{mollification}
Let $(B_n,p_n,R_n)$ be the tuple given by the $(\ref{Main_proposition})$, define the mollified solution $(\mB_l,\mP_l,\mR_l)$ by 
\begin{align}
\mB_l &= B_n \ast \eta_l, \label{mollified_B}\\
\mP_l &= p_n \ast \eta_l - \Big (|\mB_l|^2 - |B_n|^2 \ast \eta_l \Big ), \label{mollified_p}\\
\mR_l &= R_n \ast \eta_l + \mB_l \tten \mB_l - \big ( B_n \tten B_n \big ) \ast \eta_l, \label{mollified_R}
\end{align}
where $\tten$ is the traceless tensor product such that $a\tten b = a_i b_j - \d_{ij}a_jb_j$ and $\eta_l$ is the standard mollifier with the length scale $l$, i.e.
\begin{equation}\label{def_mollifier}
\eta_l (x) = l^{-3}\eta \big( \frac{x}{l}\big),  \qquad \int_{\T^3} \eta(x)dx = 1
\end{equation}
in which the length scale is given by 
\begin{equation} \label{molli_length_scale}
l^{-1} := \d^{-\frac{1}{2}}_{n+1} \l_n \d^{\frac{1}{2}}_n = \lambda^\gm_{n+1}
\end{equation}
and \begin{equation} \label{def_gamma}
\gm = \frac{1}{b}(1-\beta)+\beta = \frac{281}{8000} \approx 0.035.
\end{equation}
\begin{Lemma}\label{mollification_lemma} For any $m \in \N$, the mollification $(\mB_l,\mP_l,\mR_l)$ satisfy $\eqref{EMHD_Reynolds_system}$ and has the following estimate
\begin{subequations}
\begin{align}
||\mB_l - B_n||_{L^2(\T^3)} &\les \delta^{\frac{1}{2}}_{n+1}, \label{mollification_lemma1}\\ 
||\nabla^{m+1} \mB_l||_{L^2(\T^3)} &\les l^{-m} \lambda_{n}\delta^{\frac{1}{2}}_{n},  \label{mollification_lemma2}\\ 
||\nabla^m \mR_l ||_{L^1(\T^3)} &\les l^{-m} \delta_{n+1}. \label{mollification_lemma3}
\end{align}
\end{subequations}

\end{Lemma}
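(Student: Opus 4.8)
The plan is to verify the three estimates \eqref{mollification_lemma1}--\eqref{mollification_lemma3} by standard mollification arguments, the only nontrivial point being the bookkeeping of the length scale $l$ against the inductive parameters. First I would observe that since mollification commutes with differential operators and with the Leibniz-type nonlinearities only up to commutator terms, the corrected pressure \eqref{mollified_p} and corrected stress \eqref{mollified_R} are precisely designed so that $(\mB_l,\mP_l,\mR_l)$ solves \eqref{EMHD_Reynolds_system}: convolving \eqref{Iteration_system 1} with $\eta_l$ gives
\[
\nabla\times\mB_l + \operatorname{div}\big((B_n\otimes B_n)\ast\eta_l\big) + \nabla(p_n\ast\eta_l) = \operatorname{div}(R_n\ast\eta_l),
\]
and adding and subtracting $\operatorname{div}(\mB_l\tten\mB_l)$ together with the identity $\operatorname{div}(a\otimes a) = (a\cdot\nabla)a + \nabla(\tfrac12|a|^2)$ on the divergence-free field $\mB_l$ produces exactly the $\tten$ and $|\mB_l|^2$ corrections appearing in \eqref{mollified_p}--\eqref{mollified_R}. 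Divergence-freeness of $\mB_l$ is immediate since $\eta_l\ast$ commutes with $\operatorname{div}$.

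Next I would prove the quantitative bounds. For \eqref{mollification_lemma1}, the standard mollification estimate gives $\|\mB_l - B_n\|_{L^2} \lesssim l\,\|\nabla B_n\|_{L^2} \lesssim l\,\lambda_n\delta_n^{1/2}$, and by the definition \eqref{molli_length_scale}, $l\,\lambda_n\delta_n^{1/2} = \delta_{n+1}^{1/2}$ exactly, so this is sharp. For \eqref{mollification_lemma2}, I use $\|\nabla^{m+1}\mB_l\|_{L^2} = \|\nabla^m(\nabla B_n)\ast\eta_l\|_{L^2}\lesssim l^{-m}\|\nabla B_n\|_{L^2}\lesssim l^{-m}\lambda_n\delta_n^{1/2}$, invoking \eqref{Inductive Hypothesis 3}; here I only differentiate the mollifier $m$ times so no extra derivatives fall on $B_n$. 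For \eqref{mollification_lemma3}, I split $\nabla^m\mR_l$ into $\nabla^m(R_n\ast\eta_l)$, bounded by $l^{-m}\|R_n\|_{L^1}\lesssim l^{-m}\delta_{n+1}$ using \eqref{Inductive Hypothesis 1}, plus the quadratic commutator $\nabla^m\big(\mB_l\tten\mB_l - (B_n\tten B_n)\ast\eta_l\big)$. The latter is the standard ``mollification of a product'' commutator, for which one has (e.g.\ by the Constantin--E--Titi type estimate)
\[
\big\|\mB_l\tten\mB_l - (B_n\tten B_n)\ast\eta_l\big\|_{L^1} \lesssim l^2\,\|\nabla B_n\|_{L^2}^2,
\]
and more generally $\|\nabla^m(\cdots)\|_{L^1}\lesssim l^{2-m}\|\nabla B_n\|_{L^2}^2$; substituting \eqref{Inductive Hypothesis 3} this is $\lesssim l^{2-m}\lambda_n^2\delta_n = l^{-m}(l\lambda_n\delta_n^{1/2})^2 = l^{-m}\delta_{n+1}$ by \eqref{molli_length_scale}, matching the required bound.

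The main obstacle is not any single estimate but making sure every power of $l$ is absorbed correctly: the choice \eqref{molli_length_scale} is calibrated so that $l\lambda_n\delta_n^{1/2}=\delta_{n+1}^{1/2}$, and both the linear error in \eqref{mollification_lemma1} and the quadratic commutator in \eqref{mollification_lemma3} must collapse to exactly $\delta_{n+1}^{1/2}$ and $\delta_{n+1}$ respectively through this identity; any mismatch would break the induction downstream. I would also need to check that $l^{-1}=\lambda_{n+1}^\gamma$ with $\gamma$ as in \eqref{def_gamma} is consistent with \eqref{molli_length_scale}, i.e.\ that $\delta_{n+1}^{-1/2}\lambda_n\delta_n^{1/2} = \lambda_{n+1}^{\gamma}$, which follows from $\lambda_n\approx\lambda_{n+1}^{1/b}$, $\delta_n = \lambda_n^{-2\beta}$, $\delta_{n+1}=\lambda_{n+1}^{-2\beta}$ and the arithmetic $\gamma = \tfrac1b(1-\beta)+\beta$. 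The commutator estimate itself is the only genuinely non-elementary ingredient, but it is classical and can be cited.
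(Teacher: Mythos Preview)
Your proposal is correct and follows essentially the same approach as the paper's proof: verify that the mollified triple solves the Reynolds system by convolving \eqref{Iteration_system 1} and absorbing the commutator into the definitions \eqref{mollified_p}--\eqref{mollified_R}, then establish \eqref{mollification_lemma1}--\eqref{mollification_lemma2} by the standard mollification/derivative transfer together with \eqref{Inductive Hypothesis 3} and the identity $l\lambda_n\delta_n^{1/2}=\delta_{n+1}^{1/2}$, and finally handle \eqref{mollification_lemma3} by splitting off $R_n\ast\eta_l$ and invoking the Constantin--E--Titi commutator estimate (Proposition~\ref{lemma_CET_commutator_est}) on the quadratic term. Your additional remarks on the arithmetic consistency of $l^{-1}=\lambda_{n+1}^\gamma$ are accurate but not needed for the lemma itself.
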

\begin{proof}
With straightforward computation we obtain 
\begin{align*}
\curl \mB_l + \divv \big( \mB_l \otimes \mB_l \big) + \nabla \mP_l &= \Big ( \curl B_n +\nabla p_n \Big)\ast \eta_l +\divv \big(\mB_l \otimes \mB_l \big) + \big (\nabla |\mB_l|^2 - \nabla |B_n|^2\ast \eta_l \big) \\
&= \divv R_n \ast \eta_l + \divv \big( \mB_l \tten \mB_l \big) - \divv \big( B_n \tten B_n \big) = \divv \mR_l.
\end{align*}
Notice that 
\begin{align*}
\mB_l - B_n = \int_{\T^3} \Big(B_n(y)-B_n(x) \Big)\eta_l(x-y) dy = \int_{\T^3} \Big(B_n(x-lz)-B_n(x) \Big)\eta(z) dz \les l |\nabla B_n|,
\end{align*}
hence 
\[
\twonorm{\mB_l - B_n} \les l \twonorm{\nabla B_n} \les (\l_n\d^\haf_n)^{-1} \d^\haf_{n+1} (\l_n\d^\haf_n) = \d^\haf_{n+1}.
\]
This establishes $\eqref{mollification_lemma1}$. $\eqref{mollification_lemma2}$ is obtained by an integration by part with a simple estimate :
\[
\twonorm{\nabla^{m+1}\mB_l} \les l^{-m}\twonorm{\nabla B_n} \|\eta \|_{C^m(\T^3)}\les l^{-m} \l_n \d^\haf_n.
\]
From $\eqref{mollified_R}$ we have that 
\begin{align*}
\onenorm{\nabla^m R_l} &\les \onenorm{\nabla^m (R_n \ast \eta_l)} + \onenorm{\nabla^m \Big( \mB_l \tten \mB_l - (B_n \tten B_n)\ast \eta_l \Big)}\\
&\les l^{-m}\d_{n+1} + l^{2-m} \twonorm{\nabla B_n}^2 \les l^{-m}\d_{n+1} + l^{-m} (\l_n \d^\haf_n)^{-2} \d_{n+1} (\l_n \d_n^\haf)^2 \\
&\les l^{-m} \d_{n+1},
\end{align*}
where we used the lemma $\ref{lemma_CET_commutator_est}$. 
\end{proof}

\subsection{Stationary Mikado flow} In this paper, Mikado flow introduced by \cite{DS17} as the building blocks is used for the convex integration scheme. To begin with, we present a version of geometric lemma due to the works by Nash \cite{Nas54}. Denote $\mathcal{S}^{d \times d}_+$ be the set of positive definite symmetrical $d\times d$ matrices and $\mathbf{e}_k = \frac{k}{|k|}$.

\begin{Lemma}\label{geometric_lemma} For any compact subset $\mathcal{N} \subset \mathcal{S}^{d \times d}_+$, there exists a finite set $\lb \subset \Z^d$ and smooth function $\Gm_k \in C^\infty(\mathcal{N};\R)$ for any $k\in \lb$ such that 
\begin{equation*}
R = \sum_{k\in \lb} \Gm^2_k (R) \mathbf{e}_k \otimes \mathbf{e}_k, \ \ \ \textit{for all } R \in \mathcal{N}.
\end{equation*}
\end{Lemma}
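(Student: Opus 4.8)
The plan is to reduce this to elementary convex geometry of the cone of positive definite matrices, exploiting two linear‑algebra facts, a local inversion, and a partition‑of‑unity patching. First I would record the facts. The rank‑one symmetric matrices $\eek\otimes\eek$ ($k$ ranging over $\Z^d$) span the whole space $\mathcal{S}^{d\times d}$ of symmetric matrices: the directions $k=e_i$ give the diagonal elementary matrices and $k=e_i+e_j$ the symmetrized off‑diagonal ones. Moreover, since $\mathcal{N}$ is a compact subset of the \emph{open} cone of positive definite matrices and the rational directions $\{k/|k|:k\in\Z^d\}$ are dense in $S^{d-1}$, one can fix $N$ so large that, with $\lb:=\{k\in\Z^d:0<|k|\le N\}$, the finitely many matrices $\{\eek\otimes\eek\}_{k\in\lb}$ still span $\mathcal{S}^{d\times d}$ and every $R\in\mathcal{N}$ lies in the interior of the closed convex cone $\mathcal{C}$ they generate — the polyhedral cones over such balls of rational directions exhaust the open positive definite cone as $N\to\infty$, and compactness of $\mathcal{N}$ lets a single $N$ serve. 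I would also note that $\lb$ is invariant under coordinate sign changes and permutations, so $\sum_{k\in\lb}\eek\otimes\eek=c\,\mathrm{Id}$ for some constant $c>0$.

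Next I would build a local decomposition near a fixed $R_0\in\mathcal{N}$. Because $R_0$ lies in the interior of $\mathcal{C}$ and the generators span, I can write $R_0=\sum_{k\in\lb}a_k\,\eek\otimes\eek$ with all $a_k>0$. The linear synthesis map $\Phi:\R^{\lb}\to\mathcal{S}^{d\times d}$, $\Phi((\gm_k))=\sum_{k\in\lb}\gm_k\,\eek\otimes\eek$, is onto, so I fix a linear right inverse $\Psi$ and set $\gm^{R_0}_k(R):=a_k+[\Psi(R-R_0)]_k$. These are affine, hence $C^\infty$, functions of $R$ satisfying $\sum_{k\in\lb}\gm^{R_0}_k(R)\,\eek\otimes\eek=R$ identically, and by continuity $\gm^{R_0}_k>0$ on a ball $U_{R_0}\ni R_0$.

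Finally I would patch and repair positivity. Cover the compact set $\mathcal{N}$ by finitely many balls $U_{R_1},\dots,U_{R_M}$, choose a subordinate smooth partition of unity $\chi_1,\dots,\chi_M$ with $\sum_j\chi_j\equiv1$ on a neighborhood of $\mathcal{N}$; the combinations $\sum_j\chi_j(R)\gm^{R_j}_k(R)$ are smooth and decompose $R$, but are only nonnegative. To cure this I would apply the previous two steps not to $\mathcal{N}$ but to the compact set $\{R-\e_0\,\mathrm{Id}:R\in\mathcal{N}\}$, where $\e_0>0$ is small enough that $R-\e_0\,\mathrm{Id}$ stays positive definite for every $R\in\mathcal{N}$ (possible precisely because $\mathcal{N}$ is compact inside the \emph{open} cone). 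This yields smooth nonnegative $b_k(R)$ with $\sum_{k\in\lb}b_k(R)\,\eek\otimes\eek=R-\e_0\,\mathrm{Id}$ near $\mathcal{N}$. Adding back $\e_0\,\mathrm{Id}=\tfrac{\e_0}{c}\sum_{k\in\lb}\eek\otimes\eek$ and defining $\Gm_k(R):=(b_k(R)+\e_0/c)^{1/2}$, the radicand is smooth and bounded below by the positive constant $\e_0/c$, so $\Gm_k\in C^\infty$ on a neighborhood of $\mathcal{N}$ and $\sum_{k\in\lb}\Gm^2_k(R)\,\eek\otimes\eek=R$ for all $R\in\mathcal{N}$, with $\lb$ finite — which is the claim.

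I expect the main obstacle to be exactly this last positivity point: after the partition‑of‑unity patching the coefficient of a given $\eek\otimes\eek$ may vanish, and a vanishing coefficient is fatal when one takes the square root within $C^\infty$; peeling off a uniform multiple $\e_0\,\mathrm{Id}$ of the identity — which is distributable evenly over $\lb$ thanks to the symmetry of $\lb$ — is what fixes it. A secondary subtlety worth flagging is that the integrality constraint $k\in\Z^d$ blocks the slick reduction to the single matrix $R_0=\mathrm{Id}$ via a congruence $R\mapsto A^{-1}RA^{-\mathrm{T}}$ (which would move the $\eek$ off the rational sphere), so the density/cone‑exhaustion argument of the first step is genuinely needed rather than cosmetic.
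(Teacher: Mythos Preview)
The paper gives no proof of its own here; it simply cites \cite{DLS13}, Lemma~3.2. Your argument is correct and is essentially the standard route behind that reference: a local linear right inverse of the synthesis map $\Phi$ near each $R_0$, globalized by a partition of unity over the compact $\mathcal{N}$.

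One remark on what you flag as the main obstacle: the $\e_0\,\mathrm{Id}$ repair is in fact unnecessary. Because you work with a \emph{single} index set $\Lambda$ throughout, every local coefficient $\gamma^{R_j}_k$ is strictly positive on all of $U_{R_j}\supset\supp\chi_j$, for every $k\in\Lambda$. Hence each term $\chi_j(R)\,\gamma^{R_j}_k(R)$ is nonnegative, and wherever $\sum_j\chi_j=1$ at least one term is strictly positive; the patched coefficient is therefore already strictly positive on a neighborhood of $\mathcal{N}$, and you may take square roots directly. The one place that could use an extra sentence is the ``cone exhaustion'' assertion $\mathcal{N}\subset\mathrm{int}\,\mathcal{C}$ for large $N$, which you state without argument; it is true (for instance, approximate $R-\delta\,\mathrm{Id}$ by some $S\in\mathcal{C}_N$ and use that $\Phi(\R^{\Lambda}_{>0})$ is an open cone containing the ray through $\mathrm{Id}$, so $R-S$ lands in it by homogeneity), but it is the only genuinely nontrivial step once one insists on integer directions, as you yourself correctly observe in your closing remark.
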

\begin{proof}
See \cite{DLS13}, lemma 3.2.
\end{proof}
The construction of the stationary Mikado flow can be found in section 4.1 of \cite{CL22} and section 3.2 of \cite{L19}. We provide a summary for the readers convenience. Let $\mathcal{N} = B_\haf (\IdM)$ where $B_\haf (\IdM)$ stands for a ball of radius $\haf$ centered at the identity matrix in the space $\mathcal{S}^{3 \times 3}_+$. From the above lemma we obtain a finite subset $\Lambda$ of integers in $\Z^3$ and $\Gm_k \in C^\infty(B_\haf (\IdM);\R)$ for each $k \in \Lambda$. Now choose $p_k \in (0,1)^3$ such that $p_k \neq p_{-k} + sk$ for any $s\in \R$ if both $k,-k \in \Lambda$. For each $k \in \Lambda$, let $m_k$ be the periodic line passing through $p_k$:
\begin{equation}\label{def_periodic_line_pk}
m_k := \big \{ sk+p_k \in \T^3 : s \in \R \big \}.
\end{equation}
From the choice of $p_k$ we know that $m_k \cap m_{-k} = \varnothing$ and there exist a constant $\kappa_\Lambda > 0$ such that
\[
|m_k \cap m_{k'}| \leq \kappa_\Lambda, \quad \forall k,k' \in \Lambda
\]
because $\Lambda$ is finite. Recalling from $\eqref{def_sigma_mu}$ that $\mu_{n+1} = \l^{1-\a}_{n+1}$, let $\phi, \Phi \in C^\infty_c \big([\haf, 1] \big)$ be such that 
\begin{equation}\label{condition_phi_Phi}
\Delta \Phi^{n+1}_k = \phi^{n+1}_k  \ \ \text{on} \ \  \T^3 \qquad \text{and} \qquad \int_{\T^3} (\phi^{n+1}_k)^2 = 1,
\end{equation}
where $\Phi^{n+1}_k$ and $\phi^{n+1}_k$ are given by 
\begin{equation}\label{def_Phi}
\Phi^{n+1}_k := \mu^{-1}_{n+1} \Phi \big(\mu_{n+1} \dist{x,m_k} \big) 
\end{equation}
and
\begin{equation}\label{def_phi}
\phi^{n+1}_k := \mu_{n+1} \phi \big(\mu_{n+1} \dist{x,m_k} \big). 
\end{equation}
Note that there exist $K_\Lambda > 0$ such that 
\begin{equation}\label{supp_pipe_intersection}
\supp \phi^n_k \cap \supp \phi^n_{k'} \subseteq \big \{x \in \T^3: \dist{x,m_k\cap m_{k'}} \leq K_\Lambda \mu^{-1}_{n+1}  \big \}
\end{equation}
for $k \neq k'$. The stationary Mikado flow $\WW^{n+1}_k$ is then defined by
\begin{equation}\label{def_stat_Mikado}
\WW_k^{n+1} = \phi^{n+1}_k \mathbf{e}_k. 
\end{equation}
Notice that $\WW_k^{n+1}$ can also be written by a skew-symmetric tensor $\OOm^{n+1}_k$: 
\begin{equation}\label{def_Omegak}
\OOm^{n+1}_k = \eek \otimes \nabla \Phi^{n+1}_k - \nabla \Phi^{n+1}_k \otimes \eek 
\end{equation}
since 
\begin{align}
\big ( \divv \OOm^{n+1}_k \big)_i &= \p_j [(\eek)_i (\nabla \Phi^{n+1}_k)_j] - \p_j [(\eek)_j (\nabla \Phi^{n+1}_k)_i] \notag \\
&= (\eek)_i \p_j(\nabla \Phi^{n+1}_k)_j = (\eek)_i \Delta \Phi^{n+1}_k = \phi^{n+1}_k (\eek)_i = (\WW^{n+1}_k)_i \label{equ_Omega_and_W}.
\end{align}
\\
\begin{remark} To be precise, the superscript of the Mikado flow $\WW^\npone_k$ should be written as $\WW^{\mu_\npone}_k$, which stands for the pipe flow with thickness $\mu^\mone_\npone$. Later we will use Mikado flows with spatial oscillation $\sigma_{n+1}$ and concentration $\mu_{n+1}$. They will be written as 
\[
\WW^\npone_k (\sigma_\npone x) \ \ \text{instead of}  \ \ \WW^{\mu_\npone}_k (\sigma_\npone x)
\]

\end{remark}
Next we introduce several important properties of the stationary Mikado flows by the following theorem.
\begin{Theorem}\cite{CL22}\label{prop_Mikado} For dimension $d = 3$, the stationary Mikado flows $\WW : \T^3 \to \R^3$ satisfy the following: 
\begin{enumerate}[label=\itshape(\roman*)]
\item\label{prop_Mikado1} Each $\WW^{n+1}_k \in C^\infty_c(\T^d)$ is divergence-free and 
\begin{equation*}
\WW^{n+1}_k = \divv \OOm^{n+1}_k.
\end{equation*}
In addition, it is a solution to the pressureless Euler equations
\begin{equation*}
\divv \Big( \WW^{n+1}_k \otimes \WW^{n+1}_k \Big) = 0.
\end{equation*}
\item\label{prop_Mikado2} For any $1 \leq p \leq \infty$, $N \in \N$, we have the estimates \\
\begin{align*}
\mu_n^{-N} ||\nabla^N \WW^{n+1}_k||_{L^p(\T^3)} &\les \mu_{n+1}^{1-\frac{2}{p}}, \\
\mu_n^{-N} ||\nabla^N \OOm^{n+1}_k||_{L^p(\T^3)} &\les \mu_{n+1}^{-\frac{2}{p}},
\end{align*}\\
which hold uniformly in $\mu_{n+1}$. \\
\item\label{prop_Mikado3} For any $k\in \lb$, one has 
\begin{equation*}
\fint_{\T^d} \WW^{n+1}_k \otimes \WW^{n+1}_k = \mathbf{e}_k \otimes \mathbf{e}_k,
\end{equation*}\\
moreover for any $1 \leq p \leq \infty$,\\
\begin{equation*}
||\WW^{n+1}_k \otimes \WW^{n+1}_{k'}||_{L^p(\T^d)} \les \mu_{n+1}^{2-\frac{3}{p}} \ \ \ \textit{for } k \neq k'.
\end{equation*}
\end{enumerate}
\end{Theorem}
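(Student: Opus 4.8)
\emph{Plan of proof.} This theorem merely records properties of the explicit vector fields $\WW^{n+1}_k=\phi^{n+1}_k\,\eek$, so the plan is to verify \ref{prop_Mikado1}, \ref{prop_Mikado2}, \ref{prop_Mikado3} in turn, following \cite{CL22}. Everything rests on two structural facts already in place: that $\WW^{n+1}_k=\divv\OOm^{n+1}_k$ with $\OOm^{n+1}_k$ skew-symmetric, which is exactly \eqref{equ_Omega_and_W}; and that $\phi^{n+1}_k$ and $\Phi^{n+1}_k$ depend on $x$ only through $\dist{x,m_k}$, so they are invariant under translation along the direction $k$ and are therefore annihilated by the directional derivative $\eek\cdot\nabla$.

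For \ref{prop_Mikado1}: the representation $\WW^{n+1}_k=\divv\OOm^{n+1}_k$ is \eqref{equ_Omega_and_W}, and it forces $\divv\WW^{n+1}_k=\p_i\p_j(\OOm^{n+1}_k)_{ij}=0$ since $(\OOm^{n+1}_k)_{ij}$ is antisymmetric while $\p_i\p_j$ is symmetric (equivalently $\divv\WW^{n+1}_k=\eek\cdot\nabla\phi^{n+1}_k=0$ directly). Smoothness and the fact that $\supp\WW^{n+1}_k$ is a proper compact subset of $\T^3$ follow from $\phi\in C^\infty_c([\haf,1])$, which confines the support to the annular tube $\{\,\tfrac{1}{2}\mu_{n+1}^{-1}\le\dist{x,m_k}\le\mu_{n+1}^{-1}\,\}$, a set on which $\dist{\cdot,m_k}$ is smooth once $\mu_{n+1}$ is large enough that this tube lies inside the normal injectivity radius of $m_k$. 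Finally, using divergence-freeness, $\divv(\WW^{n+1}_k\otimes\WW^{n+1}_k)=(\WW^{n+1}_k\cdot\nabla)\WW^{n+1}_k=\phi^{n+1}_k(\eek\cdot\nabla\phi^{n+1}_k)\,\eek=0$.

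For \ref{prop_Mikado2}: this is a direct estimate from \eqref{def_phi}, \eqref{def_Phi}, \eqref{def_Omegak}. On the support $\dist{\cdot,m_k}$ is smooth with $|\nabla\dist|\equiv1$ and all higher derivatives bounded by constants depending only on $\Lambda$, so each $x$-derivative landing on $\phi^{n+1}_k=\mu_{n+1}\phi(\mu_{n+1}\dist{\cdot,m_k})$ contributes one extra factor $\mu_{n+1}$; since the support is a tube of radius $\sim\mu_{n+1}^{-1}$ about a line of bounded length it has volume $\lesssim\mu_{n+1}^{-2}$, and one obtains $\|\nabla^N\WW^{n+1}_k\|_{L^p(\T^3)}\lesssim\mu_{n+1}^{N+1}\mu_{n+1}^{-2/p}$, which rearranges to the stated bound. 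The estimate for $\OOm^{n+1}_k$ is the same computation, except that $\nabla\Phi^{n+1}_k=\Phi'(\mu_{n+1}\dist)\nabla\dist$ has amplitude $\sim1$ rather than $\sim\mu_{n+1}$, so the bound loses one power of $\mu_{n+1}$.

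For \ref{prop_Mikado3}: since $\WW^{n+1}_k=\phi^{n+1}_k\eek$ we have $\WW^{n+1}_k\otimes\WW^{n+1}_k=(\phi^{n+1}_k)^2\,\eek\otimes\eek$, and $\fint_{\T^3}(\phi^{n+1}_k)^2=\int_{\T^3}(\phi^{n+1}_k)^2=1$ by the normalization in \eqref{condition_phi_Phi} (as $|\T^3|=1$), giving the first identity. For $k\ne k'$, $\WW^{n+1}_k\otimes\WW^{n+1}_{k'}=\phi^{n+1}_k\phi^{n+1}_{k'}\,\eek\otimes\mathbf{e}_{k'}$ has amplitude $\lesssim\mu_{n+1}^2$ and, by \eqref{supp_pipe_intersection}, is supported in the $K_\Lambda\mu_{n+1}^{-1}$-neighborhood of the finite set $m_k\cap m_{k'}$, of volume $\lesssim\mu_{n+1}^{-3}$, so $\|\WW^{n+1}_k\otimes\WW^{n+1}_{k'}\|_{L^p(\T^3)}\lesssim\mu_{n+1}^2\mu_{n+1}^{-3/p}$. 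I do not expect a genuine obstacle here: the substance of the theorem lies entirely in the geometric lemma (Lemma \ref{geometric_lemma}) and in the placement of the lines $m_k$ and cutoffs $\phi,\Phi$ carried out before the statement. The two points that require a little care are the smoothness of the distance function to a periodic line — which is why $\phi,\Phi$ are taken in $C^\infty_c([\haf,1])$, keeping the support off $m_k$ itself — and the bookkeeping of powers of $\mu_{n+1}$, in particular remembering that the normalization $\int_{\T^3}(\phi^{n+1}_k)^2=1$ is what fixes the amplitude of $\phi^{n+1}_k$ at $\sim\mu_{n+1}$ and of $\nabla\Phi^{n+1}_k$ at $\sim1$.
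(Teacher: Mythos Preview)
Your proposal is correct and follows essentially the same route as the paper: the divergence-free and pressureless-Euler claims in \ref{prop_Mikado1} come from $\eek\cdot\nabla\phi^{n+1}_k=0$ together with \eqref{equ_Omega_and_W}; the $L^p$ bounds in \ref{prop_Mikado2} come from the amplitude $\mu_{n+1}^{N+1}$ (resp.\ $\mu_{n+1}^{N}$) and the tube volume $\mu_{n+1}^{-2}$; and \ref{prop_Mikado3} comes from the normalization \eqref{condition_phi_Phi} plus the fact that $\supp\phi^{n+1}_k\cap\supp\phi^{n+1}_{k'}$ is covered by finitely many balls of radius $\sim\mu_{n+1}^{-1}$ via \eqref{supp_pipe_intersection}. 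The only extra commentary you add---on smoothness of $\dist{\cdot,m_k}$ and on why the amplitude of $\nabla\Phi^{n+1}_k$ is $\sim1$---is consistent with and slightly more explicit than the paper's argument.
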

\begin{proof}
Since $\phi^{n+1}_k$ is a smooth function whose level set is concentric cylinder with axis $k$, we have $\nabla \phi^{n+1}_k \cdot k = 0$. This implies that $\WW^{n+1}_k$ is divergence-free, as 
\[
\divv \WW^{n+1}_k = \nabla \phi^{n+1}_k \cdot k = 0.
\]
Moreover, we have 
\[
\bigg [ \divv \Big( \WW^{n+1}_k \otimes \WW^{n+1}_k \Big)  \bigg]_i = (\nabla \phi^{n+1}_k \cdot k ) \phi^{n+1}_k (\eek)_i = 0.
 \]
These together with $\eqref{equ_Omega_and_W}$ proves \ref{prop_Mikado1}. To show \ref{prop_Mikado2}, consider 
\begin{align*}
\|\nabla^N \WW^{n+1}_k \|_{L^p(\T^3)} \les \mu^{1+N}_{n+1} \bigg ( \int_{\supp \phi^{n+1}_k} \bigg )^\frac{1}{p} \les \mu^{1+N}_{n+1} |\supp \phi^{n+1}_k|^\frac{1}{p} = \mu^{1+N}_{n+1} \mu_{n+1}^{-\frac{2}{p}}
\end{align*}
and the estimate for $\OOm^n_k$ can be deduced in a similar manner. The identity 
\begin{equation*}
\fint_{\T^d} \WW^{n+1}_k \otimes \WW^{n+1}_k = \mathbf{e}_k \otimes \mathbf{e}_k
\end{equation*}
follows directly from the definition $\eqref{condition_phi_Phi}$ and $\eqref{def_stat_Mikado}$. If $k \neq k'$, then 
\[
||\WW^{n+1}_k \otimes \WW^{n+1}_{k'}||_{L^p(\T^d)} \les \mu_n^2 |\supp \phi^\npone_k \cap \supp \phi^\npone_{k'}|^\frac{1}{p} \les \mu^{2-\frac{3}{p}}_\npone 
\]
since by $\eqref{supp_pipe_intersection}$ we can cover the set $\supp \phi^\npone_k \cap \supp \phi^\npone_{k'}$ by finitely many balls of radius of the scale $\mu^{-1}_\npone$. These conclude the proof of \ref{prop_Mikado3}.
\end{proof}
\vspace{0.5cm}
\subsection{The magnetic field perturbation} Our goal is to choose the candidate of the solution $B_{n+1}$ based on the current one $\mB_l$ 
\begin{equation}\label{New_Magnetic_field}
B_{n+1} = \mB_l + \om_{n+1}.
\end{equation}
We call $\om_{n+1}$ the perturbation function, which consists of two components:
\begin{equation}\notag
\om_{n+1} = \om^{(p)}_{n+1} + \om^{(c)}_{n+1}.
\end{equation}
We call $\omp_{n+1}$ the principle part of the perturbation, which consists of a sum of $\WW^{n+1}_k$ with spatial oscillation $\sigma_{n+1}$ and concentration $\mu_{n+1}$: 
\begin{equation}\label{def_principle part}
\omp_{n+1} = \sum_{k \in \lb} a_k(x) \WW^{n+1}_k(\sigma_{n+1}x).
\end{equation}
Here $a_k(x)$ represents the amplitude functions which will be introduced in the next section. The role of the corrector part of the perturbation $\omc_{n+1}$ is to ensure that $B_{n+1}$ is divergence free: 
\begin{equation}\label{def_corrector part}
\omc_{n+1} = \sigma^{-1}_{n+1}\sum_{k \in \lb} \nabla a_k(x) :\OOm^{n+1}_k(\sigma_{n+1}x).
\end{equation}
\subsection{Amplitude functions}
Let $\chi$ be a monotone increasing smooth function \\
\begin{align*}
\chi(x) := \begin{cases}
4 \Big (||\mR_l||_{L^1(\T^3)} + \d_{n+1} \Big ),  \ \  &\textit{if} \quad 0 \leq |x| \leq ||\mR_l||_{L^1(\T^3)},\\\\
4|x|, \ \  &\textit{if} \quad  |x| \geq 2||\mR_l||_{L^1(\T^3)}.\\
\end{cases}
\end{align*}\\
In addition, define the cut-off function $\rho(x) \in C^\infty (\T^3)$ by \\ 
\begin{equation}\label{def_cutoff}
\rho(x) := \chi(\mR_l). 
\end{equation}\\
The amplitude function $a_k: \T^3 \to \R$ is then given by \\
\begin{equation}\label{def_amplitude}
a_k(x) = \rho^\frac{1}{2} \Gm_k \Big(\IdM - \frac{\mR_l}{\rho} \Big),
\end{equation}
in which $\Gm_k$ are smooth functions appear in lemma $\ref{geometric_lemma}$. 
\begin{Lemma}\label{lemma_Reducing_stress} The amplitude functions $a_k$ satisfy
\begin{equation*}
\sum_{k \in \lb} a^2_k \fint_{\T^3} \WW^{n+1}_k \otimes \WW^{n+1}_k dx = \rho \IdM - \mR_l.
\end{equation*}
\end{Lemma}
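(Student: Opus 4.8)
The plan is to verify the identity by direct substitution of the definition \eqref{def_amplitude} of $a_k$ and an application of the geometric lemma \ref{geometric_lemma}. First I would compute $a_k^2 = \rho\, \Gm_k^2\!\big(\IdM - \tfrac{\mR_l}{\rho}\big)$, using that $\rho = \chi(\mR_l) \geq 4\big(\onenorm{\mR_l} + \d_{n+1}\big) > 0$ is strictly positive everywhere, so the division by $\rho$ makes sense and $\rho^{1/2}$ is a genuine smooth function. Then, invoking property \ref{prop_Mikado3} of Theorem \ref{prop_Mikado} that $\fint_{\T^3} \WW^{n+1}_k \otimes \WW^{n+1}_k = \eek \otimes \eek$, the left-hand side becomes $\rho \sum_{k\in\lb} \Gm_k^2\!\big(\IdM - \tfrac{\mR_l}{\rho}\big)\, \eek \otimes \eek$.

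Second, I would check that the matrix argument $\IdM - \tfrac{\mR_l}{\rho}$ lies in the compact set $\Nn = B_{1/2}(\IdM) \subset \Ss^{3\times3}_+$ on which the $\Gm_k$ from Lemma \ref{geometric_lemma} are defined; this is exactly where the particular form of $\chi$ is used. Indeed, from the definition of $\chi$ one has the pointwise bound $|\mR_l| \leq \tfrac{1}{4}\rho$ (when $|\mR_l| \geq 2\onenorm{\mR_l}$ this is immediate from $\rho = 4|\mR_l|$; when $|\mR_l| \leq 2\onenorm{\mR_l}$ it follows from $\rho \geq 4\onenorm{\mR_l} \geq 2|\mR_l| \geq 4|\mR_l|\cdot\tfrac{|\mR_l|}{2\onenorm{\mR_l}}$, or more simply since $\chi$ is monotone and $\chi(|x|)\geq 4|x|$ for all $x$ by construction). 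Hence $\big\| \tfrac{\mR_l}{\rho} \big\| \leq \tfrac14 < \tfrac12$ in operator norm, so $\IdM - \tfrac{\mR_l}{\rho} \in B_{1/2}(\IdM)$ and Lemma \ref{geometric_lemma} applies with $R = \IdM - \tfrac{\mR_l}{\rho}$, giving
\[
\sum_{k\in\lb} \Gm_k^2\!\Big(\IdM - \tfrac{\mR_l}{\rho}\Big)\, \eek \otimes \eek = \IdM - \tfrac{\mR_l}{\rho}.
\]

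Third, I would multiply through by $\rho$ to conclude $\sum_{k\in\lb} a_k^2 \fint_{\T^3} \WW^{n+1}_k \otimes \WW^{n+1}_k = \rho\big(\IdM - \tfrac{\mR_l}{\rho}\big) = \rho\,\IdM - \mR_l$, which is the claimed identity. One should note that $\mR_l$ is traceless and symmetric (from the mollification formulas) so that the combination $\IdM - \mR_l/\rho$ is indeed symmetric; positive-definiteness then follows from the operator-norm bound just established, consistent with the requirement $R \in \Ss^{3\times3}_+$ in the geometric lemma.

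The argument is essentially bookkeeping; the only genuine point of care — which I expect to be the main (minor) obstacle — is confirming that the cutoff $\rho = \chi(\mR_l)$ is large enough that $\IdM - \mR_l/\rho$ stays within the ball $B_{1/2}(\IdM)$ where the $\Gm_k$ are defined, uniformly in $x$, including at points where $\mR_l$ is small (handled by the lower bound $\chi \geq 4(\onenorm{\mR_l} + \d_{n+1})$ which keeps $\rho$ bounded away from $0$) and where $\mR_l$ is large (handled by $\chi(|x|) = 4|x|$). Once that inclusion is in hand, the identity is an immediate consequence of Lemma \ref{geometric_lemma} and part \ref{prop_Mikado3} of Theorem \ref{prop_Mikado}.
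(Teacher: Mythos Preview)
Your proposal is correct and follows essentially the same approach as the paper: substitute $a_k^2 = \rho\,\Gm_k^2(\IdM - \mR_l/\rho)$, use Theorem~\ref{prop_Mikado}\ref{prop_Mikado3} to replace the average by $\eek\otimes\eek$, and apply the geometric Lemma~\ref{geometric_lemma} after checking that $\IdM - \mR_l/\rho \in B_{1/2}(\IdM)$. The paper's proof is just the one-line version of this, asserting the inclusion without the pointwise verification you spell out.
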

\begin{proof}
By Theorem $\ref{prop_Mikado}$, \ref{prop_Mikado3} and the geometric lemma $\ref{geometric_lemma}$ we deduce that
\[
\sum_{k \in \lb} a^2_k \fint_{\T^3} \WW^{n+1}_k \otimes \WW^{n+1}_k dx = \sum_{k \in \lb} \rho \Gm^2_k \Big(\IdM - \frac{\mR_l}{\rho} \Big)  \eek \otimes \eek =  \rho \IdM - \mR_l,
\]
due to the fact that $\IdM - \frac{\mR_l}{\rho} \in B_\haf (\IdM)$. 
\end{proof}
\vspace{0.5cm}
\subsection{The new Reynolds stress} With $B_{n+1}$ in our hand, we then aim to find the corresponding Reynolds stress $R_{n+1}$ as well as the pressure $p_{n+1}$ such that $(B_{n+1},p_{n+1},R_{n+1})$ solve the system $\eqref{Iteration_system}$. To this end, we define $R_{n+1}$ by \\
\begin{equation}\label{New_Reynold_stress}
R_{n+1} := R_{\textit{osc}} + R_F+ R_C+R_L. 
\end{equation}\\
The first term $R_{\textit{osc}}$ stands for the high oscillatory part of the new Reynolds stress and is given by \\
\begin{equation}\label{R_oscx}
R_{\textit{osc}} = \sum_{k \in \lb} \Tt \Big( \nabla(a^2_k), \WW^\npone_k(\sigma_{n+1}x)\otimes \WW^\npone_k(\sigma_{n+1}x) - \fint_{\T^3} \WW^\npone_k \otimes \WW^\npone_k \Big),
\end{equation}\\
where $\Tt : \C^\infty(\T^3,\R^3) \times C^\infty(\T^3,\R^{3\times 3}) \to C^\infty(T^3, S^{3 \times 3}_0)$ is known as the Bilinear anti-divergence: 
\begin{equation}\label{Bilinear_antidivergence}
\Tt (u,H) := u \Rr H - \Rr \big ( \nabla u \Rr H \big),
\end{equation}
in which $\Rr$ is the standard antidivergence operator (See Appendix \ref{appendix2}). \\\\
The next term $R_F$ consists of the error introduced by interference between different high frequency components: \\
\begin{equation}\label{R_F}
R_F = \sum_{k \neq k'} a_k a_{k'} \WW^\npone_k(\sigma_{n+1}x) \otimes \WW^\npone_k(\sigma_{n+1}x).
\end{equation}\\
The third term $R_C$ is due to the error introduced by the corrector function $\omc_{n+1}$: \\
\begin{equation}\label{R_C}
R_C = \Rr \Big ( \divv \big( \omc_{n+1} \otimes \om_{n+1} + \omp_{n+1} \otimes \omc_{n+1} \big )  \Big).  
\end{equation}\\
The last term $R_L$ is known as the linear error:\\
\begin{equation}\label{R_L}
R_L = \Rr \Big( \nabla \times \om_{n+1} + \divv \big( \mB_l \otimes \om_{n+1} + \om_{n+1} \otimes \mB_l \big) \Big). 
\end{equation}\\
Additionally we choose the new pressure term $p_{n+1}$ as \\
\begin{equation}\label{New_pressure}
p_{n+1} := \mP_l - \rho. 
\end{equation}
\begin{Lemma}\label{Next_iteration}
The tuple $(B_{n+1},p_{n+1},R_{n+1})$ defined by $(\ref{New_Magnetic_field})$, $(\ref{New_pressure})$ and $(\ref{New_Reynold_stress})$ respectively is a solution to the stationary EMHD-Reynolds system $\eqref{Iteration_system}$.
\end{Lemma}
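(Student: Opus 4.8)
Since the assertion is an algebraic identity, the plan is to substitute \eqref{New_Magnetic_field}, \eqref{New_pressure} and \eqref{New_Reynold_stress} into the stationary EMHD--Reynolds system \eqref{Iteration_system} and chase the cancellations, using Lemma \ref{mollification_lemma}, Lemma \ref{lemma_Reducing_stress}, Theorem \ref{prop_Mikado} and the mapping properties of $\Rr$ and $\Tt$ recorded in the Appendix (notably $\divv\Rr v=v$ for mean-free $v$, and $\divv\Tt(u,H)=uH-\fint_{\T^3}(uH)$ when $\fint_{\T^3}H=0$). First I would verify the constraint \eqref{Iteration_system 2}: $\mB_l$ is divergence free as a mollification of $B_n$, so it suffices to treat $\om_{n+1}=\omp_{n+1}+\omc_{n+1}$. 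From \eqref{equ_Omega_and_W} and the chain rule, $\WW^{n+1}_k(\sigma_{n+1}x)=\sigma_{n+1}^{-1}\divv\bigl(\OOm^{n+1}_k(\sigma_{n+1}x)\bigr)$, and the Leibniz rule for the divergence of a scalar times a tensor gives $\divv\bigl(a_k\,\OOm^{n+1}_k(\sigma_{n+1}x)\bigr)=\sigma_{n+1}\,a_k\,\WW^{n+1}_k(\sigma_{n+1}x)+\nabla a_k:\OOm^{n+1}_k(\sigma_{n+1}x)$. Summing over $k\in\lb$ and recalling \eqref{def_principle part} and \eqref{def_corrector part} yields $\om_{n+1}=\sigma_{n+1}^{-1}\divv\bigl(\sum_{k\in\lb}a_k\,\OOm^{n+1}_k(\sigma_{n+1}x)\bigr)$; since $\OOm^{n+1}_k$ is skew-symmetric, $\om_{n+1}$ is the divergence of a skew-symmetric tensor and therefore divergence free, so $\divv B_{n+1}=0$.

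Next I would insert $B_{n+1}=\mB_l+\om_{n+1}$ into $\curl B_{n+1}+\divv(B_{n+1}\otimes B_{n+1})+\nabla p_{n+1}$ and expand the quadratic term. By Lemma \ref{mollification_lemma} the triple $(\mB_l,\mP_l,\mR_l)$ solves \eqref{EMHD_Reynolds_system}, so the low-frequency block $\curl\mB_l+\divv(\mB_l\otimes\mB_l)+\nabla\mP_l$ equals $\divv\mR_l$. Using $\nabla p_{n+1}=\nabla\mP_l-\nabla\rho$ from \eqref{New_pressure} and regrouping, the left-hand side becomes $\divv\mR_l-\nabla\rho$ plus the linear contribution $\curl\om_{n+1}+\divv(\mB_l\otimes\om_{n+1}+\om_{n+1}\otimes\mB_l)$, the corrector contribution $\divv(\omc_{n+1}\otimes\om_{n+1}+\omp_{n+1}\otimes\omc_{n+1})$, and the principal self-interaction $\divv(\omp_{n+1}\otimes\omp_{n+1})$. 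The first two are sums of curls and divergences, hence mean-free, so by \eqref{R_L} and \eqref{R_C} together with $\divv\Rr v=v$ they are $\divv R_L$ and $\divv R_C$.

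It then remains to identify $-\nabla\rho+\divv\mR_l+\divv(\omp_{n+1}\otimes\omp_{n+1})$ with $\divv(R_{\mathit{osc}}+R_F)$. Expanding $\omp_{n+1}\otimes\omp_{n+1}=\sum_{k,k'}a_ka_{k'}\,\WW^{n+1}_k(\sigma_{n+1}x)\otimes\WW^{n+1}_{k'}(\sigma_{n+1}x)$, the $k\neq k'$ terms are precisely $R_F$ of \eqref{R_F}. For the diagonal terms, Theorem \ref{prop_Mikado}, \ref{prop_Mikado1} and the chain rule give $\divv\bigl(\WW^{n+1}_k(\sigma_{n+1}x)\otimes\WW^{n+1}_k(\sigma_{n+1}x)\bigr)=0$, whence $\divv\bigl(a_k^2\,\WW^{n+1}_k(\sigma_{n+1}x)\otimes\WW^{n+1}_k(\sigma_{n+1}x)\bigr)=\nabla(a_k^2)\cdot\bigl(\WW^{n+1}_k(\sigma_{n+1}x)\otimes\WW^{n+1}_k(\sigma_{n+1}x)\bigr)$. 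Splitting $\WW^{n+1}_k\otimes\WW^{n+1}_k$ into the constant $\fint_{\T^3}\WW^{n+1}_k\otimes\WW^{n+1}_k$ plus a mean-free oscillatory remainder, the constant piece sums to $\divv\bigl(\sum_k a_k^2\fint_{\T^3}\WW^{n+1}_k\otimes\WW^{n+1}_k\bigr)=\divv(\rho\,\IdM-\mR_l)=\nabla\rho-\divv\mR_l$ by Lemma \ref{lemma_Reducing_stress}, which cancels the $-\nabla\rho$ and $\divv\mR_l$ collected above; the remainder is mean-free termwise (a difference of two divergences), so $\divv\Tt(u,H)=uH$ with $u=\nabla(a_k^2)$ and $H$ the $k$-th remainder, and summing over $k$ identifies it with $\divv R_{\mathit{osc}}$ via \eqref{Bilinear_antidivergence} and \eqref{R_oscx}. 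Assembling the pieces gives $\curl B_{n+1}+\divv(B_{n+1}\otimes B_{n+1})+\nabla p_{n+1}=\divv(R_{\mathit{osc}}+R_F+R_C+R_L)=\divv R_{n+1}$, which is \eqref{Iteration_system 1} for $n+1$.

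The computation is essentially bookkeeping; the point that needs care is the handling of scalar and trace terms. One must ensure that every argument fed to $\Rr$ or $\Tt$ is mean-free so that $\divv\Rr$ acts as the identity, and that the trace parts of $\omp_{n+1}\otimes\omp_{n+1}$, of $R_F$ and of $\rho\,\IdM$ are absorbed into the pressure — it is precisely this requirement that dictates the choice $p_{n+1}=\mP_l-\rho$ in \eqref{New_pressure}, in the same spirit as the trace correction already built into $\mP_l$ in \eqref{mollified_p}. The chain-rule factors $\sigma_{n+1}$ produced when differentiating $\WW^{n+1}_k(\sigma_{n+1}x)$ also have to be tracked consistently, but they affect only the size estimates of $R_{n+1}$, not this identity.
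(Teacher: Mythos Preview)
Your proposal is correct and follows essentially the same route as the paper's proof: expand $\curl B_{n+1}+\divv(B_{n+1}\otimes B_{n+1})+\nabla p_{n+1}$, use that $(\mB_l,\mP_l,\mR_l)$ solves the mollified system, split off $\divv R_L$ and $\divv R_C$, and then in the principal self-interaction separate the off-diagonal terms as $R_F$ and reduce the diagonal ones via the pressureless-Euler identity and Lemma~\ref{lemma_Reducing_stress} to $\nabla\rho-\divv\mR_l$ plus $\divv R_{\mathit{osc}}$. Your write-up is in fact more complete than the paper's in two respects: you explicitly verify $\divv B_{n+1}=0$ by exhibiting $\om_{n+1}$ as the divergence of a skew-symmetric tensor, and you justify the mean-free hypotheses needed before invoking $\divv\Rr v=v$ and $\divv\Tt(u,H)=uH$.
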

\begin{proof} 
By direction computation, since $(\mB_l,\mP_l,\mR_l)$ solves $\eqref{Iteration_system}$ we have 
\begin{align*}
&\curl B_\npone + \divv \big( B_\npone \otimes B_\npone \big) + \nabla p_\npone = \curl \big( \om_\npone + \mB_l \big) + \divv \big( B_\npone \otimes B_\npone \big) + \nabla \mP_l - \nabla \rho \\\\
=& \curl \om_\npone + \curl \mB_l + \divv \big(\om_\npone \otimes \om_\npone \big) + \divv \big(\om_\npone \otimes \mB_l + \mB_l \otimes \om_\npone  \big) + \divv \big(\mB_l \otimes \mB_l \big) + \nabla \mP_l - \nabla \rho \\\\
=& \divv \mR_l + \curl \om_\npone + \divv \big( \om_\npone \otimes \mB_l+\mB_l \otimes \om_\npone \big) + \divv \big( \om_\npone \otimes \om_\npone \big) -\nabla \rho \\\\
=& \divv \mR_l + \divv R_L + \divv \big( \omp_\npone \otimes \omc_\npone \big) + \divv \big( \omc_\npone \otimes \om_\npone \big) + \divv \big( \omp_\npone \otimes \omp_\npone \big) - \nabla \rho \\\\
=& \divv \big( \omp_\npone \otimes \omp_\npone \big)-\nabla \rho  + \divv \mR_l + \divv R_C + \divv R_L.
\end{align*}\\
Recall from $\eqref{def_principle part}$ the expression of $\omp_\npone$, we then obtain
\begin{align*}
&\divv \big( \omp_\npone \otimes \omp_\npone \big)-\nabla \rho + \divv \mR_l-\divv R_F = \divv \Big( \sum_{k\in \Lambda} a^2_k \big ( \WW^\npone_k (\sigma_\npone x) \otimes \WW^\npone_k (\sigma_\npone x) \big)  + \mR_l \Big) - \nabla \rho \\\\
= &\divv \Big( \sum_{k\in \Lambda} a^2_k \big ( \WW^\npone_k (\sigma_\npone x) \otimes \WW^\npone_k (\sigma_\npone x) - \fint_{\T^3} \WW^{n+1}_k \otimes \WW^{n+1}_k   +\fint_{\T^3} \WW^{n+1}_k \otimes \WW^{n+1}_k \big)  + \mR_l \Big) - \nabla \rho \\\\
= &\divv \Big( \sum_{k\in \Lambda} a^2_k \big ( \WW^\npone_k (\sigma_\npone x) \otimes \WW^\npone_k (\sigma_\npone x) - \fint_{\T^3} \WW^{n+1}_k \otimes \WW^{n+1}_k \big) \Big) + \nabla \rho - \divv \mR_l + \divv \mR_l - \nabla \rho \\\\
= & \sum_{k\in \Lambda} \nabla (a^2_k) \big ( \WW^\npone_k (\sigma_\npone x) \otimes \WW^\npone_k (\sigma_\npone x) - \fint_{\T^3} \WW^{n+1}_k \otimes \WW^{n+1}_k \big) = \divv R_{\textit{osc}}
\end{align*}
and therefore
\[
\divv \big( \omp_\npone \otimes \omp_\npone \big)-\nabla \rho + \divv \mR_l = \divv R_{\textit{osc}} + \divv R_F.
\]\\
\end{proof}
\subsection{Estimation} In view of the lemma $\ref{Next_iteration}$, it suffices to show that $(B_{n+1},p_{n+1},R_{n+1})$ satisfies the inductive hypothesis $(\ref{Inductive Hypothesis 1}) - (\ref{Inductive Hypothesis 3})$ with $n$ replaced by $n+1$. In this section we will accomplish this by breaking it down into many estimates. First of all, let us give several relevant estimates on the amplitude function.\\
\begin{Lemma}\label{lemma_Amplitude_estimation}
The amplitude function $a_k$ defined in $(\ref{def_amplitude})$ satisfies the following estimates: \\
\begin{enumerate}[label=\itshape(\roman*)]
\item\label{amp_L2} The $L^2$ estimate 
\begin{equation*}
||a_k||_{L^2(\T^3)} \les \d^\frac{1}{2}_{n+1}.
\end{equation*}\\
\item\label{amp_WN1} For any positive integer $N$, one has the estimate in Sobolev space $W^{N,1}$ of $a^2_k$
\begin{align*}
||a_k||_{W^{N,1}} &\les l^{-4-N} \d^\haf_{n+1},\\
||a^2_k||_{W^{N,1}} &\les l^{-4-N} \d_{n+1}. 
\end{align*}\\

\item\label{amp_CN} For any non-negative integer $N$, one has the $N^{th}$ derivative estimate
\begin{align*}
||a_k||_{C^N} &\les l^{-8-N} \d^\frac{1}{2}_{n+1}, \\
||a^2_k||_{C^N} &\les l^{-8-N} \d_{n+1}.\\
\end{align*}
\end{enumerate}
\end{Lemma}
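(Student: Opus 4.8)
The plan is to reduce all three estimates to differentiating the explicit formula $a_k = \rho^\haf\,\Gm_k\!\left(\IdM - \tfrac{\mR_l}{\rho}\right)$: first I would pin down the properties of the cut-off $\rho = \chi(\mR_l)$ that the argument rests on, and then run the chain and Leibniz rules, inserting the mollification bounds of Lemma~\ref{mollification_lemma}. The properties of $\rho$ I would use are: (a) since $\chi$ is monotone increasing, $\rho = \chi(\mR_l) \ge \chi(0) = 4\big(\onenorm{\mR_l}+\delta_{n+1}\big) \ge 4\delta_{n+1}$ pointwise, so every negative power of $\rho$ arising later is $\lesssim$ a power of $\delta_{n+1}^{-1}$; (b) from the case definition, $\rho \lesssim \delta_{n+1} + \onenorm{\mR_l} + |\mR_l|$ pointwise, so integrating and using $\eqref{mollification_lemma3}$ at $m=0$ gives $\onenorm{\rho}\lesssim\delta_{n+1}$; (c) a short case check, immediate in the constant and linear regimes and using monotonicity of $\chi$ in the transition regime, yields $|\mR_l|\le\tfrac12\rho$ pointwise, so (as $\mR_l$ is symmetric traceless) $\IdM-\tfrac{\mR_l}{\rho}$ takes values in the closed ball $\overline{B_\haf(\IdM)}\subset\mathcal{S}^{3\times 3}_+$ on which $\Gm_k$ and all its derivatives are bounded by Lemma~\ref{geometric_lemma}; and (d) $\rho$ is smooth, being locally constant near the zero set of $\mR_l$ and a smooth function of $\mR_l$ off it.

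With these in hand the $L^2$ bound \ref{amp_L2} drops out at once: $\twonorm{a_k}^2 = \int_{\T^3}\rho\,\Gm_k(\IdM-\mR_l/\rho)^2\,dx \lesssim \|\Gm_k\|_{C^0(\overline{B_\haf(\IdM)})}^2\,\onenorm{\rho}\lesssim\delta_{n+1}$.

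For \ref{amp_WN1} and \ref{amp_CN} I would expand $\nabla^N a_k$ by the Fa\`a di Bruno formula applied to the smooth outer map $\Gm_k$, together with the Leibniz rule for the prefactor $\rho^\haf$: each resulting term is a bounded derivative of $\Gm_k$ (evaluated in $\overline{B_\haf(\IdM)}$) times a monomial in $\rho^{-1}$ times a power of $\rho^\haf$ times a product of factors $\nabla^{j_i}\mR_l$ and $\nabla^{j}\rho$ of total order $\le N$. Since $\rho=\chi(\mR_l)$, the $\nabla^{j}\rho$ are controlled by the $\nabla^{j'}\mR_l$ through the bounded derivatives of $\chi$, so everything reduces to derivative estimates for $\mR_l$: the $L^1$ bound $\|\nabla^m\mR_l\|_{L^1(\T^3)}\lesssim l^{-m}\delta_{n+1}$ of $\eqref{mollification_lemma3}$, together with companion $L^\infty$ bounds of the form $\|\nabla^m\mR_l\|_{L^\infty(\T^3)}\lesssim l^{-c-m}$ (for a fixed $c$) obtained from the definition of $\mR_l$, the mollifier inequality $\|\nabla^m(f\ast\eta_l)\|_{L^\infty}\lesssim l^{-3-m}\|f\|_{L^1}$, and the commutator estimate already invoked in Lemma~\ref{mollification_lemma}. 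Measuring every factor in $L^\infty$ yields \ref{amp_CN} for $a_k$; measuring a single top-order $\mR_l$-factor in $L^1$ and the rest in $L^\infty$ yields \ref{amp_WN1}. In both cases the crude bounds one gets this way are dominated by the stated ones, since the exponents $8+N$ and $4+N$ are chosen with a comfortable surplus of $l^{-1}=\lambda_{n+1}^\gamma$ powers (and $\delta_{n+1}=\lambda_{n+1}^{-2\beta}$ with $\beta$ tiny), so the slack absorbs the Fa\`a di Bruno combinatorics, the sum over the finitely many $k\in\lb$, and the possible extra inverse powers of $\delta_{n+1}$. For the $a_k^2$ estimates I would rerun the same computation with $\rho^\haf$ replaced by $\rho$ and $\Gm_k$ by $\Gm_k^2$ — naively squaring the $a_k$-bounds is lossy — which gives the same bounds with $\delta_{n+1}$ in place of $\delta_{n+1}^{1/2}$.

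The main obstacle is the bookkeeping in that last step: one has to push the mollification and commutator losses for $\mR_l$ cleanly through the composition defining $a_k$, track the powers of $\delta_{n+1}$ (including the inverse powers spawned by differentiating $\rho^\haf$ and $\mR_l/\rho$) and check they are all beaten by the $l^{-1}$-surplus, and confirm that $\chi$ can indeed be chosen so that $\rho$ inherits the $l$-scaling of $\mR_l$ uniformly — in particular when $\onenorm{\mR_l}$ happens to be much smaller than $\delta_{n+1}$. None of this is conceptually hard once the term structure is written out; it is pure accounting, which is exactly why the statement allows itself such generous exponents.
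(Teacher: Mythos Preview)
Your approach is correct and follows the same overall strategy as the paper: reduce everything to derivative bounds on $\mR_l$ via the chain rule applied to the composition defining $a_k$, with the factor $\delta_{n+1}$ coming from a single $L^1$ estimate on $\mR_l$ (equivalently on $\rho$). The one packaging difference worth noting is how the $L^\infty$ control on $\mR_l$ is obtained. You propose to derive it directly from the mollifier inequality $\|f\ast\eta_l\|_{L^\infty}\lesssim l^{-3}\|f\|_{L^1}$ and the commutator structure of $\mR_l$; the paper instead routes everything through the $L^1$ bounds $\eqref{mollification_lemma3}$ already in Lemma~\ref{mollification_lemma}, invoking the Sobolev embedding $W^{4,1}(\T^3)\hookrightarrow L^\infty(\T^3)$ to pass from $L^1$ to $L^\infty$ (this is exactly where the $l^{-4}$ in \ref{amp_WN1} originates). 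As a result the paper proves \ref{amp_WN1} first and then deduces \ref{amp_CN} by a second application of the same embedding (another $l^{-4}$, hence $l^{-8}$), whereas you would do \ref{amp_CN} first and then \ref{amp_WN1}. The paper's route is a bit cleaner since it never re-opens the definition of $\mR_l$ to get a separate $L^\infty$ bound on the commutator piece, and it replaces your by-hand Fa\`a di Bruno bookkeeping with the pre-packaged composition estimate of Proposition~\ref{lemma_holder_composition}; your route is equally valid and more self-contained.
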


\begin{proof}
Recall that 
\begin{equation*}
a_k(x) = \rho^\frac{1}{2} \Gm_k \Big(\IdM - \frac{\mR_l}{\rho} \Big)
\end{equation*}
hence \ref{amp_L2} simply follows from 
\[
\twonorm{a_k} \les \onenorm{\mR_l}^\haf \les \d_\npone^\haf
\]
For \ref{amp_WN1} we use lemma $\ref{lemma_holder_composition}$ and Sobolev embedding $W^{4,1} \hookrightarrow L^\infty$ to get that 
\begin{align*}
\WNOnorm{a^2_k} &\les \sum_{i=0}^N \onenorm{\nabla^i \mR_l} \LinfNorm{\nabla^{N-i}\Gm^2_k\big( \IdM - \frac{\mR_l}{\rho} \big) } \\
&\les \sum_{i=0}^N \onenorm{\nabla^i \mR_l} \bigg ( \LinfNorm{\nabla^{N-i}\Big( \frac{\mR_l}{\rho}\Big)} \sum_{j=1}^{N-i} \Linfnorm{\nabla^j \Gm^2_k} \LinfNorm{\frac{\mR_l}{\rho}}^{j-1} \bigg)\\
&\les \sum_{i=0}^N \onenorm{\nabla^i \mR_l}  \Linfnorm{\nabla^{N-i}\mR_l}\d^\mone_\npone  \\
&\les \sum_{i=0}^N \onenorm{\nabla^i \mR_l}  \|\mR_l\|_{W^{4+N-i,1}}\d^\mone_\npone \les  \sum_{i=0}^N l^{-i}\d_\npone l^{-(4+N-i)} \d_\npone \d^\mone_\npone \\
&\les l^{-4-N} \d_\npone 
\end{align*}
Similarly we obtain that 
\[
\WNOnorm{a_k} \les l^{-4-N} \d^\haf_\npone. 
\]\\
Finally, \ref{amp_CN} follows from \ref{amp_WN1} and the Sobolev embedding $W^{4,1} \hookrightarrow L^\infty$. \\
\end{proof}

\subsubsection{Estimate the perturbation $\om_{n+1}$} We begin with estimating the perturbation function here. By the improved H\"older's inequality (see lemma \ref{lemma_immproved_holder}) as well as Theorem $\ref{prop_Mikado}$, \ref{prop_Mikado2}, we first obtain an $L^2$ estimate on the principle part. Recall from $\eqref{def_principle part}$ that 
\[
\omp_{n+1} = \sum_{k \in \lb} a_k(x) \WW^\npone_k(\sigma_{n+1}x),
\]
we have 
\begin{align}\label{L2_principle}
\twonorm{\omp_{n+1}} &\les ||a_k||_{L^2(\T^3)} ||\WW^\npone_k||_{L^2(\T^3)} + \sigma^{-\onehalf}_{n+1} \Conenorm{a_k} ||\WW^\npone_k||_{L^2(\T^3)} \notag \\ \notag \\
& \les \d^\onehalf_{n+1} + \sigma^{-\onehalf}_{n+1} l^{-9} \d^\onehalf_{n+1} \les \d^\onehalf_{n+1}, 
\end{align}
provided that $\sigma^{-\onehalf}_{n+1}l^{-9} \les 1$, or equivalently 
\begin{align*}
\sigma^{-\onehalf}_{n+1}l^{-9} \les 1 \Leftrightarrow \l^{-\onehalf \alpha +9\gm}_{n+1} \les 1  \Leftrightarrow  -\haf \alpha + 9\gamma \leq  0.
\end{align*}\\
This is guaranteed by our choice of parameters, indeed, recall from $(\ref{def_gamma})$ that $\gm = \frac{281}{8000}$, 
\[
 -\haf \alpha + 9\gamma = -\haf \times \frac{16}{25}+ 9 \times \frac{281}{8000} = -\frac{31}{8000} < 0.
\]
Similarly, we can derive an $H^1(\T^3)$ estimate: \\
\begin{align*}
\Honenorm{\omp_{n+1}} &= \Honenorm{\sum_{k}a_k \WW^\npone_k(\sigma_{n+1}\cdot)}   \\\\ 
&\les \twonorm{\sum_{k} \nabla a_k \WW^\npone_k(\sigma_{n+1}\cdot)} + \twonorm{\sum_{k} a_k \nabla \WW^\npone_k(\sigma_{n+1}\cdot)} \\ \notag \\ 
&\les \Conenorm{a_k} \twonorm{\WW^\npone_k} + \sigma_{n+1}\twonorm{a_k} \twonorm{\nabla \WW^\npone_k}+ \sigma^{\haf}_\npone \Conenorm{a_k^2} \twonorm{\nabla \WW^\npone_k}  \\\\
&\les l^{-9} \d^\onehalf_{n+1} + \sigma_{n+1} \mu_{n+1} \d^\onehalf_{n+1} + \sigma^\haf_\npone l^{-9}\mu_\npone \ddelta^\haf \les \sigma_{n+1} \mu_{n+1} \d^\onehalf_{n+1}.
\end{align*}\\
Remember that $\l_{n+1} = \sigma_{n+1} \mu_{n+1}$ and $l^{-9}= \l_{n+1}^{9\gm} << \l_{n+1}$, therefore \\
\begin{equation}\label{H1_principle}
 \Honenorm{\omp_{n+1}}  \les \l_{n+1} \d^\onehalf_{n+1}. 
\end{equation}\\
Sobolev interpolation yields the $H^s(\T^3)$ bound: \\
\begin{equation}\label{Hs_principle}
\Hsnorm{\omp_{n+1}} \les \twonorm{\omp_{n+1}}^{1-s}  \Honenorm{\omp_{n+1}}^s \les \l^s_{n+1} \d^\onehalf_{n+1},
\end{equation}\\
for $0 \leq s \leq 1$. Additionally, we can deduce the $L^1(\T^3)$ boundedness for $\omp_{n+1}$ via lemma \ref{lemma_immproved_holder} : 
\begin{align}\notag
\onenorm{\omp_{n+1}} &\les \onenorm{a_k} \onenorm{\WW^\npone_k} + \sigma^{-1}_{n+1}\Conenorm{a_k} \onenorm{\WW^\npone_k} \\ \notag \\ \notag 
& \les \twonorm{a_k}\onenorm{\WW^\npone_k} + \sigma^{-1}_{n+1}\Conenorm{a_k} \onenorm{\WW^\npone_k}\\ \notag \\ \label{L1_principle}
&\les \mu^{-1}_{n+1} \d^\onehalf_{n+1} + l^{-9} \sigma^{-1}_{n+1} \mu^{-1}_{n+1}  \d^\onehalf_{n+1} \les \mu^{-1}_{n+1} \d^\onehalf_{n+1}. 
\end{align}
This is valid if we have $\sigma^{-1}_{n+1}l^{-9} \les 1$, which is true since we have shown that $\sigma^{-\haf}_{n+1}l^{-9} \les 1$ before.\\\\
By H\"older interpolation, we also have that for $1<r<2$, \\
\begin{equation}\label{Lr_principle}
\rnorm{\omp_{n+1}} \les \onenorm{\omp_{n+1}}^{\frac{2-r}{r}} \twonorm{\omp_{n+1}}^{\frac{2r-2}{r}} \les \mu_{n+1}^{-\frac{2-r}{r}}\ddelta^\onehalf.
\end{equation}\\
Likewise, we can also derive the following bounds for $\omc_{n+1}$. Recall that 
\[
\omc_{n+1} = \sigma^{-1}_{n+1}\sum_{k \in \lb} \nabla a_k(x) :\OOm^\npone_k(\sigma_{n+1}x),
\]
we therefore have\\\\
\textit{$L^2$ boundedness}:\\
\begin{align}
\twonorm{\omc_{n+1}} &\les \sigma^{-1}_{n+1} \Linfnorm{\nabla a_k} \twonorm{\OOm^\npone_k}\notag \\
&\les \sigma^{-1}_{n+1} \mu^{-1}_{n+1} l_{n+1}^{-9} \d^\onehalf_{n+1} \notag  \\
&\les \l^{9\gm -1}_{n+1}  \d^\onehalf_{n+1}, \label{L2_corrector}
\end{align}\\
\textit{$H^1$ boundedness}:\\
\begin{align*}
\Honenorm{\omc_{n+1}} &= \sigma^{-1}_{n+1}\HoneNorm{\sum_{k} \nabla a_k : \OOm^\npone_k(\sigma_{n+1}\cdot)}  \\ 
&\les \sigma^{-1}_{n+1} \Conenorm{a_k} \sigma_{n+1} \twonorm{\nabla \OOm^\npone_k} +\sigma^{-1}_{n+1}\Ctwonorm{a_k}\twonorm{\OOm^\npone_k}\\\\
&\les l^{-9} \d^\haf_{n+1} + \sigma^{-1}_{n+1} l^{-10}\d^\haf_{n+1} \mu^{-1}_{n+1}  \les l^{-9} \d^\haf_{n+1} + \l^{-1}_{n+1} l^{-10}  \d^\haf_{n+1}
\end{align*}\\
\begin{equation}\label{H1_corrector}
  \Longrightarrow \Honenorm{\omc_{n+1}} \les \Big ( \l^{9\gm}_{n+1}+ \l^{-1+10\gm}_{n+1} \Big) \d^\haf_{n+1} \les \l_{n+1}\d^\haf_{n+1}. 
\end{equation}\\
Hence by interpolation we secure the $H^s$ boundedness: \\
\begin{equation}\label{Hs_corrector}
\Hsnorm{\omc_{n+1}} \les \l^s_{n+1} \d^\onehalf_{n+1}, \ \ \ s \leq 1.
\end{equation}\\
\textit{$L^1$ boundedness}:\\
\begin{align*}
\onenorm{\omc_{n+1}} &\les \sigma^{-1}_{n+1} \Conenorm{a_k} \onenorm{\OOm^\npone_k} \les\sigma^{-1}_{n+1}l^{-9}  \mu^{-2}_{n+1}\d^\onehalf_{n+1} \\\\
&\les \l^{-1+9\gm}_{n+1} \powermone{\mu_{n+1}} \d^\onehalf_{n+1} \les \powermone{\mu_{n+1}}\ddelta^\onehalf.
\end{align*}\\
 And again by interpolation, we have for $1 < r < 2$,\\
\begin{equation}\label{Lr_corrector}
\rnorm{\omc_{n+1}} \les \mu_{n+1}^{-\frac{2-r}{r}}\ddelta^\onehalf.
\end{equation}\\
It is worth noticing that \\
\begin{equation}\label{compare_pandc}
\omc_{n+1} \ll \omp_{n+1} 
\end{equation}\\
in both $L^r(\T^3)$ and $H^{s}(\T^3)$. We summarize the bounds of $\om_{n+1}$ in the following proposition:\\
\begin{Proposition}\label{perturbation_estimation} The magnetic perturbation $\om_{n+1}$ has the following bounds:\\
\begin{enumerate}[label=\slshape(\roman*)]
\item \label{Hs_perturbation}
For $0 \leq s \leq 1$, we have the $H^s$ bound: \\
\begin{equation*}
\Hsnorm{\om_{n+1}} \les \l^s_{n+1} \ddelta^\onehalf.
\end{equation*}\\
\item \label{Lr_perturbation} For $r \in (1,2)$, we have the $L^r$ bound: \\
\begin{equation*}
\rnorm{\om_{n+1}} \les \mu_{n+1}^{-\frac{2-r}{r}}\ddelta^\onehalf.
\end{equation*}\\
\end{enumerate}
\end{Proposition}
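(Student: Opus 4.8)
The plan is to derive Proposition~\ref{perturbation_estimation} directly from the componentwise estimates already established for the principal part $\omp_{n+1}$ and the corrector $\omc_{n+1}$, combined with the triangle inequality. Recalling the decomposition $\om_{n+1} = \omp_{n+1} + \omc_{n+1}$ from the construction, for any norm $\|\cdot\|$ we have $\|\om_{n+1}\| \leq \|\omp_{n+1}\| + \|\omc_{n+1}\|$, so it suffices to add the two contributions in the $H^s$ norm and in the $L^r$ norm.

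For part~\ref{Hs_perturbation}, I would invoke $\eqref{Hs_principle}$, which gives $\Hsnorm{\omp_{n+1}} \les \l^s_{n+1}\ddelta^\onehalf$ for $0 \leq s \leq 1$, together with $\eqref{Hs_corrector}$, which gives the same bound $\Hsnorm{\omc_{n+1}} \les \l^s_{n+1}\ddelta^\onehalf$; adding them yields the claim. In fact $\eqref{compare_pandc}$ records that $\omc_{n+1}$ is of strictly lower order than $\omp_{n+1}$ — by a factor $\l^{9\gm-1}_{n+1}$ produced by the $\sigma^{-1}_{n+1}$ prefactor in the definition of the corrector — so the principal part dictates the size; this refinement is not needed for the stated inequality but is reassuring. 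For part~\ref{Lr_perturbation} with $r \in (1,2)$, I would similarly combine $\eqref{Lr_principle}$ and $\eqref{Lr_corrector}$, both of which read $\les \mu_{n+1}^{-\frac{2-r}{r}}\ddelta^\onehalf$; these in turn follow by H\"older interpolation between the $L^1$ bound $\eqref{L1_principle}$ (resp. the analogous $L^1$ bound for $\omc_{n+1}$) and the $L^2$ bounds $\eqref{L2_principle}$, $\eqref{L2_corrector}$.

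There is essentially no obstacle remaining at this point: the substantive work — the improved H\"older inequality applied to $\sum_{k\in\lb} a_k \WW^\npone_k(\sigma_{n+1}\cdot)$, the Mikado estimates of Theorem~\ref{prop_Mikado}\,\ref{prop_Mikado2}, and the amplitude estimates of Lemma~\ref{lemma_Amplitude_estimation} — was already carried out in the preceding subsection. The only arithmetic to keep an eye on is that the exponents $9\gm-1$ and $10\gm-1$ appearing in $\eqref{L2_corrector}$–$\eqref{H1_corrector}$ are negative, which holds since $\gm = \frac{281}{8000}\approx 0.035$; this is what makes $\omc_{n+1}$ negligible against $\omp_{n+1}$ and the displayed powers of $\l_{n+1}$ and $\mu_{n+1}$ the operative ones. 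With Proposition~\ref{perturbation_estimation} in hand, the bounds $\eqref{Main_prop_1}$–$\eqref{Main_prop_2}$ for $B_{n+1}-B_n = (\mB_l - B_n) + \om_{n+1}$ will follow from $\eqref{mollification_lemma1}$ and the $s=0,1$ cases of part~\ref{Hs_perturbation}, and the same estimates feed the forthcoming bound on the new Reynolds stress $R_{n+1}$.
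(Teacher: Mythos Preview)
Your proposal is correct and follows exactly the paper's approach: the proof there simply cites $\eqref{Hs_principle}$, $\eqref{Hs_corrector}$ for part~\ref{Hs_perturbation} and $\eqref{Lr_principle}$, $\eqref{Lr_corrector}$ for part~\ref{Lr_perturbation}, combining them via the triangle inequality. Your additional remarks about $\eqref{compare_pandc}$ and the sign of $9\gm-1$ are accurate extras but not needed for the stated bounds.
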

\begin{proof}
The first one of the above follows from $\eqref{Hs_principle}$ and $\eqref{Hs_corrector}$ while the second one follows from $\eqref{Lr_principle}$ and $\eqref{Lr_corrector}$.\\
\end{proof}
A direct consequence of proposition $\ref{perturbation_estimation}$ is that $(\ref{Inductive Hypothesis 2})$ and $(\ref{Inductive Hypothesis 3})$ hold for $B_{n+1}$ with $n$ replaced by $n+1$. In the following sections, we will develop an $L^r$ bound for $R_{n+1}$ for some $1<r<2$ such that $r$ is very close to 1. 
\subsubsection{Estimate the linear error $R_L$} Recall from $(\ref{R_L})$ that \\
\begin{equation*}
R_L =  \Rr \Big( \nabla \times \om_{n+1} + \divv \big( \mB_l \otimes \om_{n+1} + \om_{n+1} \otimes \mB_l \big) \Big). 
\end{equation*}\\
We use the fact that the operator $(\Rr \nabla \times )$ is bounded from $L^r$ to $L^r$ for $1 < r < \infty$ as well as the Sobolev embedding $  H^2 \hookrightarrow L^\infty$ to deduce that \\
\begin{align*}
\rnorm{R_L} &\les \rnorm{\Rr \nabla \times \om_{n+1}} + \rnorm{\mB_l \otimes \om_{n+1}}+ \rnorm{\om_{n+1} \otimes  \mB_l }\\\\
&\les \rnorm{\om_{n+1}} + \Linfnorm{\mB_l} \rnorm{\om_{n+1}} \\\\
&\les \rnorm{\om_{n+1}} + ||\mB_l ||_{H^2(\T^3)} \rnorm{\om_{n+1}} \les ||\mB_l ||_{H^2(\T^3)} \rnorm{\om_{n+1}}.
\end{align*}\\
Furthermore, applying Lemma $\ref{mollification_lemma}$ and Lemma $\ref{perturbation_estimation}$,\\
\begin{equation}\label{rnorm_linear}
\rnorm{R_L} \les l^{-1}\l_n \d^\onehalf_n \cdot \mu_{n+1}^{-\frac{2-r}{r}}\ddelta^\onehalf \les \l^{\frac{1-\beta}{b}+\gm}_{n+1}\l_{n+1}^{(1-\a)(1-\frac{2}{r})} \ddelta^\haf \les \d_{n+2}.
\end{equation}\\
The last inequality is satisfied provided that \\
\begin{equation*}
2\beta b +\frac{2(1-\b)}{b} + (1-\a)(1-\frac{2}{r}) < 0. 
\end{equation*}\\
Inserting the number $\b = \frac{1}{250}$, $b = 32$ and $\a=\frac{16}{25}$, we obtain
\[
2\beta b +\frac{2(1-\b)}{b} + (1-\a)(1-\frac{2}{r}) \approx -\frac{167}{4000} < 0.
\]
if we choose $r$ close enough to 1. 
\vspace{0.5cm}
\subsubsection{Estimate the corrector error} Recall from $(\ref{R_C})$ that \\
\begin{equation*}
R_C = \Rr \Big ( \divv \big( \omc_{n+1} \otimes \om_{n+1} + \omp_{n+1} \otimes \omc_{n+1} \big )  \Big), 
\end{equation*}\\
by Sobolev embedding $H^{3(1-\frac{1}{r})}(\T^3) \hookrightarrow L^{\frac{2r}{2-r}}(\T^3)$ and $(\ref{compare_pandc})$\\
\begin{align*}
\rnorm{R_C} &\les \rnorm{\omc_{n+1}\otimes \om_{n+1}} + \rnorm{\omp_{n+1}\otimes \omc_{n+1}} \\\\
&\les \twonorm{\omc_{n+1}} \Big( ||\om_{n+1}||_{L^{\frac{2r}{2-r}}} + ||\omp_{n+1}||_{L^\frac{2r}{2-r}} \Big)\\\\
&\les \twonorm{\omc_{n+1}} ||\omp_{n+1}||_{L^\frac{2r}{2-r}} \les \twonorm{\omc_{n+1}} ||\omp_{n+1}||_{H^{3(1-\frac{1}{r})}}.
\end{align*}\\
Utilize $(\ref{Hs_principle})$ and $(\ref{L2_corrector})$ to obtain \\
\begin{equation}\label{rnorm_corrector}
\rnorm{R_C} \les \l_{n+1}^{-1+9\gm}\l_{n+1}^{3(1-\frac{1}{r})} \ddelta \les \d_{n+2}.
\end{equation}\\
The last inequality holds since  
\begin{equation*}
 2\beta (b-1) + 9\gm -1 + 3\Big( 1-\frac{1}{r} \Big) \approx -0.43 < 0,
\end{equation*}
for $r$ sufficiently close to $1$.
\subsubsection{Estimate the oscillation error $R_{osc}$} Recall from $(\ref{R_oscx})$ that \\
\begin{equation*}
R_{\textit{osc}} = \sum_{k \in \lb} \Tt \Big( \nabla(a^2_k), \WW^\npone_k(\sigma_{n+1}x)\otimes \WW^\npone_k(\sigma_{n+1}x) - \fint_{\T^3} \WW^\npone_k \otimes \WW^\npone_k \Big).
\end{equation*}\\
For the convenience of presentation let us define that 
\begin{equation*}
\W_k(x) =\WW^\npone_k(x) \otimes \WW^\npone_k(x) - \fint_{\T^3} \WW^\npone_k \otimes \WW^\npone_k.
\end{equation*}\\
Applying Theorem \ref{Theorem_bilinear_antidiv} we deduce that \\
\begin{align*}
\rnorm{R_{osc}} &\les \sum_k \rnorm{\Tt(\nabla(a_k^2), \W_k)}\\
&\les \sum_{k} \Conenorm{\nabla(a^2_k)} \rnorm{\Rr \Big(\W_k(\sigma_{n+1}\cdot) \Big)} \\\\
&\les \Ctwonorm{a^2_k} \sigma_{n+1}^{-1} \rnorm{\W_k} \les \sigma_{n+1}^{-1}\Ctwonorm{a^2_k} \rnorm{\WW^\npone_k \otimes \WW^\npone_k}  \\\\
&\les  \sigma_{n+1}^{-1}\Ctwonorm{a^2_k} ||\WW^\npone_k||^2_{L^{2r}(\T^3)}. 
\end{align*}\\
Thanks to lemma $\ref{lemma_Amplitude_estimation}$ and Theorem $\ref{prop_Mikado}$, \ref{prop_Mikado2}, we have that\\
\begin{equation}\label{rnorm_oscx}
 \rnorm{R_{osc}} \les \sigma_{n+1}^{-1} l^{-10}  \d_{n+1} \mu_{n+1}^{2-\frac{2}{r}} \les \d_{n+2},
\end{equation}\\
where the last inequality is ensured by \\
\begin{equation*}
 0 > 2\beta(b-1) + 10\gm -\a + 2 (1-\a) \Big( 1-\frac{1}{r} \Big) \approx -0.03
\end{equation*}
with $r$ being close enough to 1.
\subsubsection{Estimate the high frequency interference $R_F$} Recall from $(\ref{R_F})$ that \\
\begin{equation*}
R_F  = \sum_{k \neq k'} a_k a_{k'} \WW^\npone_k(\sigma_{n+1}\cdot) \otimes \WW^\npone_{k'}(\sigma_{n+1}\cdot),
\end{equation*}\\
hence by lemma $\ref{lemma_immproved_holder}$ 
\begin{align*}
\rnorm{R_F} &\les \rnorm{\sum_{k \neq k'}a_k a_{k'} \WW^\npone_k(\sigma_{n+1}\cdot) \otimes \WW^\npone_{k'}(\sigma_{n+1}\cdot)}\\\\
&\les \rnorm{a^2_k} \rnorm{\WW^\npone_k \otimes \WW^\npone_{k'}} + \sigma^{-\frac{1}{r}}_\npone \Conenorm{a^2_k} \rnorm{\WW^\npone_k \otimes \WW^\npone_{k'}} \\\\
&\les \Big( \twonorm{a_k}^{\frac{2}{r}} \|a_k\|^{\frac{2r-2}{r}}_{L^\infty(\T^3)} + \sigma_\npone^{-\frac{1}{r}}\Conenorm{a^2_k}\Big) \rnorm{\WW^\npone_k \otimes \WW^\npone_{k'}} \\\\
&\les \Big(  l^{\frac{8(r-1)}{r}} \ddelta + \sigma_\npone^{-\frac{1}{r}} l^{-9} \ddelta \Big) \rnorm{\WW^\npone_k \otimes \WW^\npone_{k'}} \\\\
&\les \sigma_\npone^{-\frac{1}{r}} l^{-9} \ddelta \rnorm{\WW^\npone_k \otimes \WW^\npone_{k'}}.
\end{align*}\\
Finally applying property \ref{prop_Mikado3} in Theorem $\ref{prop_Mikado}$ yields \\
\begin{equation}\label{rnorm_interference}
\rnorm{R_F} \les l^{-9}\sigma_\npone^{-\a} \mu^{2-\frac{3}{r}}_{n+1}\ddelta \les \l^{9\gm -\a + (1-\a)(2-\frac{3}{r})-2\beta}_{n+1}
\les \d_{n+2}, 
\end{equation}\\
which is implied by 
\begin{equation*}
0 > 2\beta (b-1) + 9\gm-\a +(1-\a) \Big(2-\frac{3}{r} \Big) \approx -0.43.
\end{equation*}\\
With $(\ref{rnorm_linear})$, $(\ref{rnorm_corrector})$, $(\ref{rnorm_oscx})$ and $(\ref{rnorm_interference})$, it becomes apparent that \\
\begin{align*}
\onenorm{R_{n+1}} &\les \rnorm{R_{n+1}} \\\\
&\les \rnorm{R_L}+\rnorm{R_C}+\rnorm{R_{osc}}+\rnorm{R_F} \\\\
&\les \d_{n+2},
\end{align*}\\
which is $(\ref{Inductive Hypothesis 1})$ with $n$ replaced by $n+1$. 
\qed

\section*{Acknowledgments} 
The author is grateful to Professor Alexey Cheskidov for the open problem and multiple inspirational discussions. Special thanks go to Professor Mimi Dai for providing the UIC summer research fellowship during which this paper was completed. 

\newpage
\appendix
\section{Technical tools}\label{appendix1}
This section includes a collection of propositions and lemmas that we used in this paper. The first one is the commutator estimate initially done by Constantin, Eyink and Titi\cite{CET94}. 
\begin{Proposition}\label{lemma_CET_commutator_est}
Let $f,g$ be smooth functions on $\T^3$ and $\eta_l$ the standard mollifier with length scale $l$, then for any $m \in \N$ and $r \in [1,\infty]$, they satisfy the following estimate 
\begin{equation}\label{lemma_CET_commutator_est1}
\rnorm{\nabla^m \Big( (fg)\ast \eta_l -(f\ast \eta_l)(g \ast \eta_l) \Big )} \les l^{2-m} \| \nabla f\|_{L^{2r}(\T^3)} \| \nabla g\|_{L^{2r}(\T^3)}.
\end{equation}
\end{Proposition}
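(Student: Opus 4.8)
The only genuine issue is that the right-hand side controls $f$ and $g$ only through \emph{first} derivatives, so the $m$ derivatives on the left must be routed entirely onto the mollifier $\eta_l$ and never onto $f$ or $g$; a direct Leibniz expansion of $(fg)\ast\eta_l-(f\ast\eta_l)(g\ast\eta_l)$ would instead produce $\nabla^{j}f$ and $\nabla^{m-j}g$. I will avoid this by working with the symmetrized representation of the commutator $r_l(f,g):=(fg)\ast\eta_l-(f\ast\eta_l)(g\ast\eta_l)$. Expanding the bracket and using $\int_{\T^3}\eta_l=1$ one verifies
\[
r_l(f,g)(x)=\frac12\int_{\T^3}\int_{\T^3}\eta_l(y)\,\eta_l(z)\,\big(f(x-y)-f(x-z)\big)\big(g(x-y)-g(x-z)\big)\,dy\,dz .
\]
This form involves only first differences of $f$ and $g$, which is exactly what the gain $l^{2-m}$ together with the factors $\|\nabla f\|_{L^{2r}}\|\nabla g\|_{L^{2r}}$ is tailored to.

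Next I would change variables $u=x-y$, $v=x-z$ so that the $x$-dependence sits entirely inside the mollifiers,
\[
r_l(f,g)(x)=\frac12\int_{\T^3}\int_{\T^3}\eta_l(x-u)\,\eta_l(x-v)\,\big(f(u)-f(v)\big)\big(g(u)-g(v)\big)\,du\,dv ,
\]
apply $\nabla^m_x$, which now hits only $\eta_l(x-u)\eta_l(x-v)$, expand by Leibniz' rule
\[
\nabla^m_x\big[\eta_l(x-u)\,\eta_l(x-v)\big]=\sum_{j=0}^m\binom{m}{j}(\nabla^j\eta_l)(x-u)\,(\nabla^{m-j}\eta_l)(x-v),
\]
and undo the substitution. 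This writes $\nabla^m r_l(f,g)$ as a finite sum of double integrals of exactly the same shape as the first displayed identity, but with $\eta_l$ replaced by $\nabla^j\eta_l$ and $\nabla^{m-j}\eta_l$.

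I would then bound each such term by Minkowski's integral inequality (moving $\rnorm{\cdot}$ inside the $dy\,dz$ integral), H\"older with $\frac1r=\frac1{2r}+\frac1{2r}$, and the elementary estimate $\|f(\cdot-y)-f(\cdot-z)\|_{L^{2r}(\T^3)}\le|y-z|\,\|\nabla f\|_{L^{2r}(\T^3)}$ (fundamental theorem of calculus along the segment from $z$ to $y$, then Minkowski). This reduces the claim to the scalar inequality
\[
\rnorm{\nabla^m r_l(f,g)}\les\|\nabla f\|_{L^{2r}(\T^3)}\|\nabla g\|_{L^{2r}(\T^3)}\sum_{j=0}^m\binom{m}{j}\int\!\!\int|y-z|^2\,\big|\nabla^j\eta_l(y)\big|\,\big|\nabla^{m-j}\eta_l(z)\big|\,dy\,dz .
\]
Finally, with $|y-z|^2\le 2(|y|^2+|z|^2)$ and the scaling $\nabla^j\eta_l(\cdot)=l^{-3-j}(\nabla^j\eta)(\cdot/l)$, which gives $\int|y|^2|\nabla^j\eta_l(y)|\,dy\lesssim l^{2-j}$ and $\int|\nabla^j\eta_l(y)|\,dy\lesssim l^{-j}$, every summand is $\lesssim l^{2-j}l^{-(m-j)}+l^{-j}l^{2-(m-j)}=l^{2-m}$; summing the $m+1$ terms yields the estimate. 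The main obstacle is thus conceptual and confined to the first step — finding the representation in which $\nabla^m$ falls only on $\eta_l$ — after which one needs only that $\eta$ is a standard (say, compactly supported) smooth mollifier, so that the weighted moments $\int|w|^2|\nabla^j\eta(w)|\,dw$ are finite, and the remainder is routine scaling bookkeeping.
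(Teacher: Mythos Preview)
Your argument is correct: the symmetrized double-integral identity for the commutator, the change of variables placing all $x$-dependence in the mollifiers, the Leibniz expansion, and the final scaling computation are all valid, and together give exactly \eqref{lemma_CET_commutator_est1}. The paper does not supply its own proof of this proposition; it simply cites \cite{L19}, Proposition~B.1 (the result going back to Constantin--E--Titi \cite{CET94}). Your sketch is in fact the standard proof one finds in those references, so there is no substantive difference in approach to discuss---you have filled in what the paper leaves to a citation.
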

\begin{proof}
See \cite{L19}, proposition B.1.
\end{proof}
The next one is known as the improved H\"older's inequality which is due to Modena and Székelyhidi\cite{MS18}. This result provides an improved estimate when one of the two multiplying functions has much higher oscillation. 
\begin{Lemma}\label{lemma_immproved_holder}
Suppose $f,g$ are two smooth functions on $\T^3$ and $r \in [1,\infty]$, then for any $\sigma \in \N$ we have 
\begin{equation}\label{lemma_improved_holder1}
\Big|\rnorm{fg(\sigma \cdot)} - \rnorm{f}\rnorm{g} \Big | \les \sigma^{-\frac{1}{r}} \Conenorm{f} \rnorm{g}.
\end{equation}
\end{Lemma}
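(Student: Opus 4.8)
The plan is to reduce the estimate to a partition argument on cubes of sidelength $\sigma^{-1}$, in the spirit of Modena--Sz\'ekelyhidi. Since $\sigma\in\N$, decompose $\T^3$ into the $\sigma^3$ essentially disjoint cubes $Q_j$ of sidelength $\sigma^{-1}$ and fix a point $x_j\in Q_j$. The first ingredient is an exact identity: the change of variables $y=\sigma x$ together with the $\Z^3$-periodicity of $g$ gives, for every $j$,
\[
\int_{Q_j} |g(\sigma x)|^r\,dx \;=\; \sigma^{-3}\,\rnorm{g}^r ,
\]
the same for all $j$. The second ingredient is that $|f|^r$ is nearly constant on each $Q_j$: since $|x-x_j|\le \sqrt{3}\,\sigma^{-1}$ on $Q_j$ and $t\mapsto |t|^r$ is Lipschitz on the range of $f$ with constant $\lesssim \Conenorm{f}^{\,r-1}$ (for $r=1$ one simply uses $\bigl||f(x)|-|f(y)|\bigr|\le|f(x)-f(y)|\le\|\nabla f\|_{C^0}|x-y|$), we obtain $\bigl||f(x)|^r-\fint_{Q_j}|f|^r\bigr|\lesssim \Conenorm{f}^{\,r}\sigma^{-1}$ for $x\in Q_j$.

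Next I would combine these. Writing, for each $j$,
\[
\int_{Q_j}|f|^r|g(\sigma\cdot)|^r - \Bigl(\fint_{Q_j}|f|^r\Bigr)\int_{Q_j}|g(\sigma\cdot)|^r
= \int_{Q_j}\Bigl(|f|^r-\fint_{Q_j}|f|^r\Bigr)|g(\sigma\cdot)|^r ,
\]
the right-hand side is bounded in absolute value by $\lesssim \Conenorm{f}^{\,r}\sigma^{-1}\cdot\sigma^{-3}\rnorm{g}^r$. Since $\sum_j\bigl(\fint_{Q_j}|f|^r\bigr)\sigma^{-3}=\sum_j\int_{Q_j}|f|^r=\rnorm{f}^r$ and $\sum_j\int_{Q_j}|g(\sigma\cdot)|^r=\rnorm{g}^r$, summing over the $\sigma^3$ cubes gives
\[
\Bigl|\rnorm{fg(\sigma\cdot)}^r-\rnorm{f}^r\rnorm{g}^r\Bigr| \;\lesssim\; \sigma^{-1}\Conenorm{f}^{\,r}\rnorm{g}^r .
\]
It remains to pass from $r$-th powers to norms: for $a,b\ge 0$ and $r\ge1$ one has $|a-b|\le|a^r-b^r|^{1/r}$ (because $a^r\ge(a-b)^r+b^r$ when $a\ge b$, by superadditivity of $t\mapsto t^r$ on $[0,\infty)$), and applying this with $a=\rnorm{fg(\sigma\cdot)}$, $b=\rnorm{f}\rnorm{g}$ yields precisely $\bigl|\rnorm{fg(\sigma\cdot)}-\rnorm{f}\rnorm{g}\bigr|\lesssim\sigma^{-1/r}\Conenorm{f}\rnorm{g}$. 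The endpoint $r=\infty$ is handled separately and trivially, since then $\sigma^{-1/r}=1$ and $\rnorm{fg(\sigma\cdot)}\le\rnorm{f}\rnorm{g}\le\Conenorm{f}\rnorm{g}$.

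I do not expect a genuine obstacle; the entire content is in the accounting. The delicate point is making the combinatorics close exactly: each of the $\sigma^3$ cubes contributes an error of order $\sigma^{-3}$ (the $L^r$-mass of $g(\sigma\cdot)$ on that cube) times $\sigma^{-1}$ (the oscillation of $|f|^r$ across the cube), so the total error at the level of $r$-th powers is $\sigma^{3}\cdot\sigma^{-3}\cdot\sigma^{-1}=\sigma^{-1}$, which becomes the claimed $\sigma^{-1/r}$ after taking $r$-th roots. The only other thing to watch is that the Lipschitz estimate for $t\mapsto|t|^r$ be phrased so as to include $r=1$, where that map is merely Lipschitz rather than $C^1$.
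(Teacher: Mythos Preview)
Your argument is correct and is precisely the partition-into-$\sigma^{-1}$-cubes proof of Modena--Sz\'ekelyhidi; the paper does not supply its own proof but simply cites \cite{MS18}, Lemma~2.1, so your write-up is in full agreement with the referenced source.
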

\begin{proof}
See \cite{MS18}, lemma 2.1.
\end{proof}
The following results provide a H\"older estimate for multivariate function compositions. 
\begin{Proposition}\label{lemma_holder_composition}
Let $\Psi: \Omega \to \R$ and $u: \R^n \to \Om$ be two smooth functions with $\Om \subset \R^d$. The for any positive integer $m$, we have the following estimate
\begin{equation}\label{lemma_holder_composition1}
\|\nabla^m(\Psi \circ u) \|_{L^\infty} \les \|\nabla^m u\|_{L^\infty} \sum_{i=1}^m \|\nabla^i \Psi\|_{L^\infty} \|u \|^{i-1}_{L^\infty}.
\end{equation}
\end{Proposition}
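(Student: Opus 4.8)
The plan is to reduce the estimate to the Faà di Bruno formula combined with a standard one–dimensional interpolation (Landau–Kolmogorov) inequality, so that no genuinely new idea is needed beyond careful bookkeeping. First I would record the chain‑rule expansion: differentiating $\Psi\circ u$ a total of $m$ times and organizing the terms by how many times $\Psi$ itself is differentiated, one obtains a finite linear combination
\[
\nabla^m(\Psi\circ u)=\sum_{i=1}^m\;\sum_{\substack{j_1+\dots+j_i=m\\ j_\ell\ge 1}} c_{j_1,\dots,j_i}\,\big((\nabla^i\Psi)\circ u\big)\!:\!\big(\nabla^{j_1}u\otimes\cdots\otimes\nabla^{j_i}u\big),
\]
where the inner sum is over the ways of splitting the $m$ differentiations into $i$ nonempty groups, the $c_{j_1,\dots,j_i}$ are universal constants depending only on $m$, and ``$:$'' denotes the natural contraction of the symmetric $i$‑tensor $\nabla^i\Psi$ against the tensors $\nabla^{j_\ell}u$. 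This identity is proved by a routine induction on $m$ (differentiate the formula for $\nabla^{m-1}$ and apply the chain rule once more); the precise values of the constants play no role.

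Next I would estimate each summand in $L^\infty$. Passing absolute values through the contractions immediately gives
\[
\big\|\big((\nabla^i\Psi)\circ u\big)\!:\!\big(\nabla^{j_1}u\otimes\cdots\otimes\nabla^{j_i}u\big)\big\|_{L^\infty}\;\les\;\|\nabla^i\Psi\|_{L^\infty}\prod_{\ell=1}^i\|\nabla^{j_\ell}u\|_{L^\infty}.
\]
The one step that is not pure bookkeeping is collapsing this product of mixed‑order derivatives of $u$ into the target shape $\|u\|_{L^\infty}^{i-1}\|\nabla^m u\|_{L^\infty}$. For this I would invoke the interpolation inequality $\|\nabla^{j}u\|_{L^\infty}\les\|u\|_{L^\infty}^{1-j/m}\|\nabla^m u\|_{L^\infty}^{j/m}$ for $0\le j\le m$ (the endpoints $j=0,m$ are trivial, and the intermediate cases are the classical Kolmogorov/Gagliardo–Nirenberg estimates). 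Multiplying over $\ell=1,\dots,i$ and using $j_1+\dots+j_i=m$,
\[
\prod_{\ell=1}^i\|\nabla^{j_\ell}u\|_{L^\infty}\;\les\;\|u\|_{L^\infty}^{\,i-\sum_\ell j_\ell/m}\,\|\nabla^m u\|_{L^\infty}^{\,\sum_\ell j_\ell/m}=\|u\|_{L^\infty}^{\,i-1}\,\|\nabla^m u\|_{L^\infty},
\]
so every term of ``type $i$'' is bounded by $\|\nabla^i\Psi\|_{L^\infty}\|u\|_{L^\infty}^{i-1}\|\nabla^m u\|_{L^\infty}$.

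Finally I would sum over the (finitely many) partitions, absorbing the constants $c_{j_1,\dots,j_i}$ and the number of partitions into the implicit constant, and then sum over $i=1,\dots,m$; this yields exactly
\[
\|\nabla^m(\Psi\circ u)\|_{L^\infty}\les\|\nabla^m u\|_{L^\infty}\sum_{i=1}^m\|\nabla^i\Psi\|_{L^\infty}\|u\|_{L^\infty}^{i-1},
\]
as claimed. I expect the only real subtlety to be the interpolation step: one must apply the Landau–Kolmogorov inequality in the appropriate setting — on the torus $\T^n$, which is where the proposition is actually used in the paper, it holds with no decay hypothesis on $u$ — and one must check that the exponents telescope correctly, which they do precisely because the partition orders satisfy $j_1+\dots+j_i=m$. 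Everything else is the standard Faà di Bruno combinatorics.
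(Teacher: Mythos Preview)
Your argument is correct and is precisely the standard one: the paper does not give its own proof but defers to \cite{DLS14}, Proposition~4.1, whose proof is exactly Fa\`a di Bruno combined with the $L^\infty$ interpolation $\|\nabla^{j}u\|_{L^\infty}\les\|u\|_{L^\infty}^{1-j/m}\|\nabla^{m}u\|_{L^\infty}^{j/m}$ to collapse the product $\prod_\ell\|\nabla^{j_\ell}u\|_{L^\infty}$. Your identification of the interpolation step as the only nontrivial point, and your observation that it is unproblematic on $\T^n$ (or on $\R^n$ for smooth functions with bounded derivatives of all orders up to $m$), is accurate.
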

\begin{proof}
See \cite{DLS14}, proposition 4.1. 
\end{proof}

\section{The Antidivergence operator}\label{appendix2}
This section provides a fundamental background of the antidivergence operator $\Rr$ and the bilinear antidivergence operator $\Tt$ introduced in \cite{DLS13} and \cite{CL22} respectively. 
\begin{Definition}[\cite{CL22}, definition B.2]\label{def_antidiv} 
Denote $\Ss^{3\times 3}_0$ the set of traceless $3 \times 3$ matrices. The antidivergence operator $\Rr: C^\infty(\T^3;\R^3) \to C^\infty (\T^3; \Ss^{3\times 3}_0)$ is defined by 
\begin{equation}\label{def_antidiv_formula1}
(\Rr v)_{ij} = \Rr_{ijk} v_k,
\end{equation}
in which 
\begin{equation}\label{def_antdiv_formula2}
\Rr_{ijk} = -\frac{2-d}{d-1} \Delta^{-2} \p_i \p_j \p_k - \frac{1}{d-1} \Delta^\mone \p_k \d_{ij} + \Delta^\mone \p_i \d_{jk} + \Delta^\mone \p_j \d_{ik}.
\end{equation}
\end{Definition}

\begin{Lemma}[\cite{DLS13}, lemma 4.3]\label{lemma_antidiv_properties} 
The antidivergence operator $\Rr$ satisfies the following properties: \\
\begin{enumerate}[label=\itshape(\roman*)]
\item $\Rr$ is a traceless symmetric matrix on $\T^3$.\\
\item $\Rr$ serves as an inverse operator of the divergence, i.e.
\[
\divv (\Rr v ) = v - \fint_{\T^3} v.  
\]
\end{enumerate}
\end{Lemma}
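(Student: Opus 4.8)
The plan is to verify all three assertions (symmetry, tracelessness, and the inverse property) by a direct computation with the explicit constant--coefficient Fourier multipliers defining $\Rr$ in Definition \ref{def_antidiv}; no soft argument is needed. I would work first on functions of zero mean on $\T^3$, where $\Delta^{-1}$ and $\Delta^{-2}$ are unambiguous Fourier multipliers, and handle the zeroth Fourier mode only at the end: every summand of $\Rr_{ijk}$ annihilates constants, which is precisely what produces the correction $-\fint_{\T^3}v$ in part (ii). It is also worth recording at the outset that $\divv$ of a symmetric traceless matrix field always has vanishing mean, so $v-\fint_{\T^3}v$ is the sharpest identity one could hope for.

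For the symmetry in (i) I would simply inspect the four summands of $\Rr_{ijk}$: the terms $\Delta^{-2}\p_i\p_j\p_k$ and $\Delta^{-1}\p_k\d_{ij}$ are each symmetric under $i\leftrightarrow j$, while the remaining pair $\Delta^{-1}\p_i\d_{jk}+\Delta^{-1}\p_j\d_{ik}$ is carried into itself by $i\leftrightarrow j$; hence $\Rr_{ijk}=\Rr_{jik}$ and $\Rr v$ is symmetric. For tracelessness I would contract $i=j$ and sum, using $\p_i\p_i=\Delta$, $\Delta^{-2}\Delta=\Delta^{-1}$, $\d_{ii}=d$ and $\sum_i\p_i\d_{ik}=\p_k$; this collapses $\sum_i\Rr_{iik}$ to a single scalar multiple of $\Delta^{-1}\p_k$, and inserting $d=3$ makes that coefficient vanish.

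For part (ii) I would apply $\p_j$ and sum over $j$ in $(\Rr v)_{ij}=\Rr_{ijk}v_k$. The $\Delta^{-2}\p_i\p_j\p_k$ summand produces $\Delta^{-2}\p_i\Delta\p_kv_k=\Delta^{-1}\p_i\divv v$; the $\d_{ij}$ summand produces another multiple of $\Delta^{-1}\p_i\divv v$; and the remaining pair produces one more multiple of $\Delta^{-1}\p_i\divv v$ (from $\sum_j\p_j\d_{jk}=\p_k$) together with $\Delta^{-1}\Delta v_i$ (from $\sum_j\p_j\p_j\d_{ik}$). On zero--mean functions $\Delta^{-1}\Delta v_i=v_i$, and the coefficients multiplying $\Delta^{-1}\p_i\divv v$ add up to zero for $d=3$, leaving $\divv(\Rr v)=v$ on this subspace; restoring the zeroth mode, which $\Rr$ kills, replaces $v$ by $v-\fint_{\T^3}v$, which is the claim.

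The computation is entirely routine, so there is no genuine obstacle; the one point that demands care is the bookkeeping of the numerical constants in the $d=3$ specialization, since both the tracelessness and the cancellation of the spurious $\Delta^{-1}\p_i\divv v$ terms hinge on those constants being exactly right. An equivalent and perhaps tidier presentation is to pass to Fourier coefficients: for $\xi\neq0$ one writes $\widehat{\Rr}_{ijk}(\xi)$ as an explicit homogeneous degree $-1$ function of $\xi$ and checks $\widehat{\Rr}_{ijk}(\xi)=\widehat{\Rr}_{jik}(\xi)$, $\sum_i\widehat{\Rr}_{iik}(\xi)=0$, and the symbol form of $\divv\Rr=\id$ (i.e.\ $\xi_j\widehat{\Rr}_{ijk}(\xi)\propto\d_{ik}$), as elementary rational identities in $\xi$, then treats $\xi=0$ separately; the content is the same.
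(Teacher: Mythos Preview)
Your approach is exactly what the paper indicates: its proof reads simply ``Direct computation'' with a pointer to \cite{CL22}, and your outline is a careful execution of precisely that computation via the constant--coefficient symbols of $\Rr_{ijk}$, first on the mean--zero subspace and then handling the zeroth mode separately.

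One caveat is worth recording, since you rightly single out the numerical bookkeeping as the only delicate point. If you carry out the arithmetic with the formula exactly as printed in Definition~\ref{def_antidiv}, neither the trace $\sum_i\Rr_{iik}$ nor the spurious $\Delta^{-1}\p_i\divv v$ contribution actually vanishes: for $d=3$ both reduce to $\Delta^{-1}\p_k$ (respectively $\Delta^{-1}\p_i\p_k$) with coefficient $1$, not $0$. The issue is a sign typo in the first coefficient; it should be $\tfrac{2-d}{d-1}$ rather than $-\tfrac{2-d}{d-1}$, and with that correction your tracelessness and cancellation claims go through verbatim (one checks $A+Bd+2C=0$ and $A+B+C=0$ with $A=\tfrac{2-d}{d-1}$, $B=-\tfrac{1}{d-1}$, $C=1$). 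So your plan is sound, but when you execute it use the corrected constant or the original formulation in \cite{DLS13}.
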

\begin{proof}
Direct computation. For detail see \cite{CL22}, Appendix B.2. 
\end{proof}

\begin{Theorem}\label{Theorem_antidiv_Lp_bound}
Let $u \in C^\infty(\T^3)$ and $1 \leq r \leq \infty$, one has 
\begin{equation*}
\rnorm{\Rr u} \lesssim \rnorm{u}.
\end{equation*}
In particular if $u \in C_0^\infty (\T^3)$, then for any $\sigma \in \N$ we have 
\[
\rnorm{\Rr u(\sigma \cdot)} \les \sigma^\mone \rnorm{u}.
\]
\end{Theorem}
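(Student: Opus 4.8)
\textbf{Proof plan for Theorem \ref{Theorem_antidiv_Lp_bound}.}
The plan is to prove the two assertions separately, relying on the explicit formula \eqref{def_antdiv_formula2} for the kernel $\Rr_{ijk}$. For the first bound, observe that each term appearing in $\Rr_{ijk}$ is a composition of $\Delta^{-1}$ with constant-coefficient differential operators of order exactly two (e.g.\ $\Delta^{-1}\p_i\p_j$, or $\Delta^{-2}\p_i\p_j\p_k$ paired with one more derivative to close the count). Equivalently, each such operator is a classical Calder\'on--Zygmund operator of order zero; on the torus this means its Fourier multiplier is a bounded, smooth-away-from-the-origin, homogeneous-degree-zero symbol. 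First I would record that such multiplier operators are bounded on $L^r(\T^3)$ for $1<r<\infty$ by the Mikhlin--H\"ormander multiplier theorem (or by transference from $\R^3$), which immediately yields $\rnorm{\Rr u}\lesssim\rnorm{u}$ in that range. To include the endpoints $r=1$ and $r=\infty$, one avoids singular-integral theory altogether: since $u\in C^\infty(\T^3)$ the function $\Rr u$ is smooth, and one can write $\Rr u$ against a fixed smooth, integrable (on the torus) convolution kernel obtained from the periodization of the Newtonian potential and its derivatives, so that $\Rr u = G * u$ with $\|G\|_{L^1(\T^3)}<\infty$; Young's inequality then gives $\rnorm{\Rr u}\le \|G\|_{L^1}\rnorm{u}$ for every $1\le r\le\infty$. (Here one uses that the zeroth Fourier mode is handled separately: $\Rr$ annihilates constants and the operator is built from $\Delta^{-1}$ restricted to mean-zero functions, so no logarithmic divergence of the kernel obstructs integrability in dimension $d=3$.)

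For the rescaled bound, suppose $u\in C_0^\infty(\T^3)$ has zero mean and let $u_\sigma(x)=u(\sigma x)$ for $\sigma\in\N$. The key point is the commutation of $\Rr$ with integer dilations: because the symbol of $\Rr_{ijk}$ is homogeneous of degree $-1$ in the frequency variable, one has the exact scaling identity $\Rr\big(u(\sigma\,\cdot)\big)(x)=\sigma^{-1}\,(\Rr u)(\sigma x)$, valid on the torus precisely because $\sigma$ is an integer (so $u_\sigma$ is still $\T^3$-periodic and its Fourier support sits on $\sigma\Z^3$). Taking $L^r$ norms and using that $x\mapsto\sigma x$ is measure-preserving on $\T^3$ gives $\rnorm{\Rr u_\sigma}=\sigma^{-1}\rnorm{(\Rr u)(\sigma\cdot)}=\sigma^{-1}\rnorm{\Rr u}\lesssim\sigma^{-1}\rnorm{u}$, which is the claimed gain. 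Then I would close by noting the second inequality follows from the first one applied to $\Rr u$.

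The only genuinely delicate point is the first step, justifying $L^r$-boundedness of $\Rr$ itself; everything else is bookkeeping. Within the range $1<r<\infty$ this is standard Calder\'on--Zygmund / Mikhlin multiplier theory, but since the paper also uses $r=1$ endpoints in places (e.g.\ estimates in $L^1(\T^3)$), the cleanest route is the convolution-kernel argument above, where the real content is checking that the periodized Green's kernel and its second derivatives are absolutely integrable on $\T^3$ — true in dimension three since the Newtonian potential $\sim|x|^{-1}$ has an integrable singularity and its ``order-zero'' combinations are no worse than $|x|^{-3}\times|x|^{3}$-type bounded pieces plus integrable tails once the mean is removed. I expect this integrability verification to be the main obstacle, and in the write-up one would either carry it out directly or simply cite the corresponding statement in \cite{CL22} and \cite{DLS13}, where this antidivergence operator and its $L^r$ mapping properties are established.
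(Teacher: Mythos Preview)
The paper does not prove this statement; it simply cites \cite{CL22}, Theorem~B.3. Your proposal therefore goes further than the paper itself, and your scaling argument for the second bound---the exact identity $\Rr\big(u(\sigma\,\cdot)\big)(x)=\sigma^{-1}(\Rr u)(\sigma x)$ on $\T^3$ for $\sigma\in\N$, coming from the degree-$(-1)$ homogeneity of the symbol---is correct and is the standard route.

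There is, however, a concrete misreading in your treatment of the first bound. You assert that each term in $\Rr_{ijk}$ is ``a composition of $\Delta^{-1}$ with constant-coefficient differential operators of order exactly two,'' offering $\Delta^{-1}\p_i\p_j$ as an example, and conclude that $\Rr$ is Calder\'on--Zygmund of order zero. But formula \eqref{def_antdiv_formula2} contains only $\Delta^{-2}\p_i\p_j\p_k$ and $\Delta^{-1}\p_i$ (plus index permutations), each of order $-1$; there is no $\Delta^{-1}\p_i\p_j$ term and no extra derivative to ``pair'' with the bi-Laplacian piece. This matters because your endpoint strategy $\Rr u = G*u$ with $G\in L^1(\T^3)$ would \emph{fail} for a genuine order-zero operator (kernel $\sim|x|^{-3}$, not integrable in dimension three) but \emph{works} precisely because $\Rr$ is order $-1$: its kernel has only an $|x|^{-2}$-type singularity at the origin, which is locally integrable on $\T^3$. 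Once the order is corrected, Young's inequality covers all $1\le r\le\infty$ in one stroke and the separate Mikhlin step becomes unnecessary; your hand-wave about ``$|x|^{-3}\times|x|^{3}$-type bounded pieces'' should be replaced by this direct observation.
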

\begin{proof}
See \cite{CL22}, Theorem B.3.
\end{proof}

\begin{Definition}[\cite{CL22}, Appendix B.3]\label{def_bilinear_antidiv}
The Bilinear antidivergence operator $\Tt : C^\infty(\T^3,\R^3) \times C^\infty(\T^3, \R^{3 \times 3}) \to C^\infty(\T^3, \Ss_0^{3 \times 3})$ is defined as 
\begin{equation}\label{def_bilinear_antidiv1}
\big(\Tt(u,H) \big)_{ij} := u_l \Rr_{ijk}H_{lk} - \Rr (\p_i u_l \Rr_{ijk}H_{lk}),
\end{equation}
or 
\[
\Tt(u,H) := u \Rr H - \Rr (\nabla u \Rr H).
\]
\end{Definition}

\begin{Theorem}\label{Theorem_bilinear_antidiv}
Given $u \in C^\infty(\T^3, \R^3)$ and $H \in C_0^\infty(\T^3, \R^{3 \times 3})$, there holds
\begin{equation}\label{Theorem_bilinear_antidiv1}
\divv \big(\Tt (u,H) \big) = uH - \fint_{\T^3} uH.
\end{equation}\\
In addition, for any $1 \leq r \leq \infty $, 
\begin{equation}\label{Theorem_bilinear_antidiv2}
\rnorm{\Tt (u,H)} \les \Conenorm{u} \rnorm{\Rr H}.
\end{equation}
\end{Theorem}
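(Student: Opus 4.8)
The statement to prove is Theorem~\ref{Theorem_bilinear_antidiv}: namely, that $\divv\big(\Tt(u,H)\big) = uH - \fint_{\T^3} uH$ and that $\rnorm{\Tt(u,H)} \les \Conenorm{u}\rnorm{\Rr H}$, where $\Tt(u,H) = u\Rr H - \Rr(\nabla u\,\Rr H)$.

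\medskip

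\noindent\textbf{Plan of proof.} The divergence identity is a direct computation exploiting the defining property of the single antidivergence operator $\Rr$ from Lemma~\ref{lemma_antidiv_properties}, namely $\divv(\Rr v) = v - \fint_{\T^3} v$. First I would write $\Tt(u,H)$ componentwise using Definition~\ref{def_bilinear_antidiv}: $\big(\Tt(u,H)\big)_{ij} = u_l \Rr_{ijk}H_{lk} - \big(\Rr(\nabla u\,\Rr H)\big)_{ij}$. Applying $\p_j$ to the first term and using the Leibniz rule produces $(\p_j u_l)\Rr_{ijk}H_{lk} + u_l\,\p_j(\Rr_{ijk}H_{lk})$. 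The key point is that, for fixed $l$, the object $\p_j(\Rr_{ijk}H_{lk})$ is exactly $\divv$ applied to $\Rr$ of the vector $H_{l\cdot}$, so by Lemma~\ref{lemma_antidiv_properties}(ii) it equals $H_{li} - \fint_{\T^3} H_{li}$; this contributes $u H - u\fint_{\T^3}H$ to the divergence. Meanwhile $\divv$ of the second term $\Rr(\nabla u\,\Rr H)$ equals $\nabla u\,\Rr H - \fint_{\T^3}\nabla u\,\Rr H$ by the same lemma, and the leftover piece $(\p_j u_l)\Rr_{ijk}H_{lk}$ from the first term is precisely $\nabla u\,\Rr H$ written in indices, so these two cancel. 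Collecting what remains gives $\divv\Tt(u,H) = uH - u\fint_{\T^3}H - \fint_{\T^3}\nabla u\,\Rr H$; one then checks that $u\fint_{\T^3}H + \fint_{\T^3}\nabla u\,\Rr H = \fint_{\T^3} uH$ — equivalently that $\fint_{\T^3}\big(\nabla u\,\Rr H\big) = \fint_{\T^3}\big(u(H - \fint H)\big) = \fint_{\T^3} u\,\divv(\Rr H)$, which is integration by parts on the torus against the mean-zero matrix field $\Rr H$. This closes \eqref{Theorem_bilinear_antidiv1}.

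\medskip

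\noindent\textbf{The $L^r$ bound.} For \eqref{Theorem_bilinear_antidiv2} I would simply estimate the two terms of $\Tt(u,H) = u\Rr H - \Rr(\nabla u\,\Rr H)$ separately. The first obeys $\rnorm{u\Rr H} \le \Linfnorm{u}\rnorm{\Rr H} \le \Conenorm{u}\rnorm{\Rr H}$ by H\"older. For the second, apply the $L^r$-boundedness of $\Rr$ (Theorem~\ref{Theorem_antidiv_Lp_bound}) to get $\rnorm{\Rr(\nabla u\,\Rr H)} \les \rnorm{\nabla u\,\Rr H} \le \Linfnorm{\nabla u}\rnorm{\Rr H} \le \Conenorm{u}\rnorm{\Rr H}$. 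Adding the two bounds yields the claim. Note that the structure of $\Tt$ — subtracting the correction $\Rr(\nabla u\,\Rr H)$ — is exactly what makes the $C^1$ norm of $u$ (rather than a higher norm) suffice: the naive antidivergence $\Rr(uH)$ would land a derivative on $u$ in a less favorable way, whereas here the only derivative on $u$ appears inside a term that is itself a bounded operator applied to $L^r$ data.

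\medskip

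\noindent\textbf{Main obstacle.} The bound is essentially free once Theorem~\ref{Theorem_antidiv_Lp_bound} is in hand, so the only genuinely delicate step is the index bookkeeping in the divergence identity — in particular verifying that the leftover term $\fint_{\T^3}\nabla u\,\Rr H$ combines with $u\fint_{\T^3}H$ to give exactly $\fint_{\T^3}uH$, which relies on the symmetry/traceless structure of $\Rr H$ and an integration by parts with no boundary terms on $\T^3$. I would carry out that verification carefully with explicit indices rather than in abstract notation, since it is where sign errors are most likely.
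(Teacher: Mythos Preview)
Your argument is correct and is exactly the standard one. Note, however, that the paper does not actually prove this theorem: its proof reads in full ``See \cite{CL22}, Theorem B.4.'' So there is no in-paper argument to compare against; what you have written is essentially the proof one finds in the cited reference.

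One small remark on your bookkeeping: when you collect terms you write $\divv\Tt(u,H) = uH - u\fint H - \fint\nabla u\,\Rr H$, but the sign on the last term should be $+$ (the correction $-\Rr(\nabla u\,\Rr H)$ contributes $-\big(\nabla u\,\Rr H - \fint\nabla u\,\Rr H\big)$, and the $-\nabla u\,\Rr H$ piece cancels the Leibniz term, leaving $+\fint\nabla u\,\Rr H$). Likewise the integration-by-parts step gives $\fint\nabla u\,\Rr H = -\fint u\,\divv(\Rr H)$, not $+$. These two slips cancel, so your conclusion is right; since you already flagged this as the place ``where sign errors are most likely,'' just be careful when you write it out. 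The hypothesis $H\in C_0^\infty$ (mean zero) is what kills the $u\fint H$ term, exactly as you use it. The $L^r$ bound is immediate from Theorem~\ref{Theorem_antidiv_Lp_bound} and H\"older, as you say.
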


\begin{proof}
See \cite{CL22}, Theorem B.4.
\end{proof}

\bibliographystyle{plain}
\bibliography{Stat_EMHD}
\end{document}